\theoremstyle{plain}
\newtheorem{theorem}{Theorem}[section]
\newtheorem{proposition}[theorem]{Proposition}
\newtheorem{lemma}[theorem]{Lemma}
\newtheorem{corollary}[theorem]{Corollary}
\newtheorem{definition}[theorem]{Definition}
\newtheorem{remark}[theorem]{Remark}
\newcommand{\eps}{\varepsilon}
\newcommand{\dopu}{{:}\allowbreak\ } 
\newcommand{\R}{{\mathbb R}}
\newcommand{\N}{{\mathbb N}}
\newcounter{abc} 
\newcounter{iiiii} 
\newenvironment{aequivalenz}
{\setcounter{iiiii}{0}
\begin{list}
{{\rm (\roman{iiiii})}}
{\usecounter{iiiii}
\parsep=0pt plus 1pt
\topsep=1pt plus 2pt minus 1pt
\itemsep=1pt plus 2pt minus 1pt
\leftmargin=3\baselineskip \labelsep=.6\baselineskip
\labelwidth=2.4\baselineskip
\rightmargin 0pt}
}
{\end{list}}
\newenvironment{statements}
{\setcounter{abc}{0}
\begin{list}
{{\rm (\alph{abc})}}
{\usecounter{abc}
\parsep=0pt plus 1pt
\topsep=1pt plus 2pt minus 1pt
\itemsep=1pt plus 2pt minus 1pt
\leftmargin=3\baselineskip \labelsep=.6\baselineskip
\labelwidth=2.4\baselineskip
\rightmargin 0pt}
}
{\end{list}}
\numberwithin{equation}{section}
\title[Daugavet centers]{Daugavet centers and direct sums of Banach spaces}
\author{Tetiana V. Bosenko}
\subjclass[2000]{Primary 46B04; secondary  46B20, 46B40}
\keywords{Daugavet center, Daugavet property, direct sum of Banach spaces}
\address{Department of Mechanics and Mathematics, V.N.~Karazin Kharkiv National University, 
pl.~Svobody~4,  61077~Kharkiv, Ukraine}
\email{t.bosenko@mail.ru}
\begin{document}
\sloppy

\begin{abstract}
A linear continuous nonzero operator $G \dopu X \to Y$ is a Daugavet center if every rank-$1$ operator $T\dopu X \to Y$ satisfies $\|G+T\|=\|G\|+\|T\|$. We study the case when either $X$ or $Y$ is a sum $X_1 \oplus_F X_2$ of two Banach spaces $X_1$ and $X_2$ by some two-dimensional Banach space $F$. We completely describe the class of those $F$ such that for some spaces $X_1$ and $X_2$ there exists a Daugavet center acting from $X_1\oplus_F X_2$, and the class of those $F$ such that for some pair of spaces $X_1$ and $X_2$ there is a Daugavet center acting into $X_1\oplus_F X_2$. We also present several examples of such Daugavet centers.
\end{abstract}

\maketitle

\section{Introduction}
In the present paper we consider real Banach spaces which do not equal $\{0\}$, and denote them $E$, $X$ or $Y$.
A linear continuous nonzero operator $G \dopu X \to Y$ is called a \textit{Daugavet center}~\cite{BosKad} if every rank-$1$ operator $T\dopu X \to Y$ satisfies the equation
\begin{equation}
\|G+T\|=\|G\|+\|T\|.
\label{eqDC} 
\end{equation}
\begin{definition}
We say that $X$ is a \textit{Daugavet domain} if there exists a Daugavet center $G \dopu X \to Y$ for some $Y$, and is a \textit{Daugavet range} if there is a Daugavet center $G \dopu E \to X$ for some $E$.
\label{DD}
\end{definition}

Throughout this paper $F=(\R^2, \|\cdot\|)$ with $\|(1,0)\|=\|(0,1)\|=1$ and
\begin{equation}
\|(a_1,a_2)\|=\|(|a_1|,|a_2|)\|
\label{ModulNormF}
\end{equation} 
for every $(a_1,a_2)\in F$. For Banach spaces $X_1$ and $X_2$ their $F$-sum $X_1 \oplus_F X_2$ is the space of all pairs $(x_1,x_2)$ where $x_1\in X_1$ and $x_2\in X_2$, $\|(x_1,x_2)\|:=\|(\|x_1\|,\|x_2\|)\|_F$.  

We introduce the following order on $F$: $(a_1, a_2)\geq (b_1, b_2)$ if $a_1\geq b_1$ and $a_2\geq b_2$. It follows from~(\ref{ModulNormF}) and a convexity argument that
for every $(a_1,a_2),(b_1,b_2)\in F$ with $(|a_1|,|a_2|)\leq(|b_1|,|b_2|)$ the inequality $\|(a_1,a_2)\|\leq\|(b_1,b_2)\|$ holds true. In this partial order $F$ is a Banach lattice~\cite{LiTz2}, so we will use the term ``two-dimensional lattice" for $F$ in the sequel.

The problem which we solve in this paper, consists of two parts: first, we characterize the class of those $F$ for which there exist $X_1$ and $X_2$ such that $X_1 \oplus_F X_2$ is a Daugavet domain, and secondly, we characterize the class of those $F$ for which there are $X_1$ and $X_2$ such that $X_1 \oplus_F X_2$ is a Daugavet range. 

Remark that a Daugavet domain and a Daugavet range are generalizations of a Banach space with the Daugavet property, and this motivates our interest in the subject. A Banach space $X$ is said to have the Daugavet property if the identity operator $\mathrm{Id} \dopu X \to X$ is a Daugavet center. The study of spaces with the Daugavet property is a rapidly developing branch of Banach space theory (see~\cite{KadSSW},~\cite{Shv1},~\cite{dirk-irbull}, and the most recent developments in~\cite{KadShepW},~\cite{KW}).
The following classical spaces have the Daugavet property: $C(K)$ where K is a compact without isolated points~\cite{Daug}, $L_{1}(\mu)$ and $L_\infty(\mu)$ where $\mu$ has no atoms~\cite{Loz}, and many Banach algebras~(\cite{Dirk10},~\cite{Woj92}). Some exotic spaces have the Daugavet property as well, for instance, Talagrand's space~(\cite{KadSSW},~\cite{Tal}) and Bourgain-Rosenthal's space~(\cite{BourRos},~\cite{KW}). 

Let us recall some recent results~\cite{BosKad} related to Daugavet centers. If $G$ is a Daugavet center then~(\ref{eqDC}) also holds true when T is a strong Radon-Nikod\'ym operator, e.g., a weakly compact operator. If $X$ is a Daugavet domain or a Daugavet range then $X$ contains subspaces isomorphic to $\ell_{1}$, is non-reflexive and does not have an unconditional basis (countable or uncountable). One cannot even embed such an $X$ into a space having an unconditional basis or having a representation as unconditional sum of reflexive subspaces. 
In~\cite{Pop} Popov proves that every isometric embedding of $L_1[0,1]$ into itself is a Daugavet center. However, in~\cite{BosKad} one can find examples of Daugavet centers which are not isometries.

The present work is inspired by~\cite{BKSW}. It was shown in~\cite{BKSW} and~\cite{KadSSW} that if $X_1$ and $X_2$ have the Daugavet property and $F=\ell_1^{\scriptscriptstyle (2)}$
or $\ell_\infty^{\scriptscriptstyle (2)}$ then $X_1 \oplus_F X_2$ has the Daugavet
property as well.
 In~\cite{BKSW} the authors prove that $X_1 \oplus_F X_2$ has the Daugavet property only if $F=\ell_1^{\scriptscriptstyle (2)}$ or $\ell_{\infty}^{\scriptscriptstyle (2)}$.
In our paper we generalize these results of~\cite{BKSW}, but use a new approach to the problem.
Surprisingly in both parts of our problem we discover other spaces apart from $F=\ell_1^{\scriptscriptstyle (2)}$ and $F=\ell_{\infty}^{\scriptscriptstyle (2)}$, which satisfy our demands.

Our approach is based on a necessary condition for a general Banach space $X$ to be a Daugavet domain and on a necessary condition for $X$ to be a Daugavet range. We deduce these two conditions in Section 2 (see Definition~\ref{Deny}, Lemma~\ref{NoDCfromDeny} and Definition~\ref{DenyAst}, Lemma~\ref{NoDCtoDenyAst}) and then we show in Section 3 how they depend on $F$ when $X=X_1\oplus_F X_2$ (see Lemma~\ref{CorFDeny} and Lemma~\ref{CorFDenyAst}).

In Section 4 we find a rather small class $\mathfrak{N}_2$ such that if $X_1 \oplus_F X_2$ is a Daugavet range then $F \in \mathfrak{N}_2$, and in Section 5 we discover the analogous class $\mathfrak{M}_2$ for the case of a Daugavet domain. Then for every $F\in \mathfrak{M}_2$ we present an example of a Daugavet center acting \textit{from} a sum of two Banach spaces by $F$ (see Proposition~\ref{DCfromExmp}), and this solves the first part of our problem. In a very similar way we solve its second part, namely we give examples of Daugavet centers acting \textit{into} a sum of two Banach spaces by $F$ for every $F\in \mathfrak{N}_2$
(see Proposition~\ref{DCtoExmp}). The obtained results illustrate that the notions of a Daugavet domain and a Daugavet range do not refer to the same Banach spaces.

Throughout this paper $B_X$ denotes the closed unit ball of $X$ and $S_X$ denotes its unit sphere. 
We use the notation
$$
B_F^+:=\{a\in B_F\dopu a\geq 0\}
$$
for the positive part of the unit ball and
$$
S_F^+:=\{a\in S_F\dopu a\geq 0\}
$$
for the positive part of the unit sphere of $F$.
We denote
$$
S(B_X,z^*,\eps):= \{x\in B_X\dopu z^*(x)>1-\eps \}
$$
the slice of $B_X$ determined by $z^*\in S_{X^*}$ and $\eps>0$.
$$
S(B_{X^*},z,\eps)= \{x^*\in B_{X^*} \dopu x^*(z)>1-\eps \}
$$ 
denotes the weak$^*$ slice of $B_{X^*}$ determined by $z\in S_{X}$ and $\eps>0$. For an $x^*\in X^*$ and a $y\in Y$ the symbol $x^*\otimes y$ stands for the operator which acts from $X$ into $Y$ as follows: $(x^*\otimes y)(x)=x^*(x)y$.

Finally, let us cite a fact that we frequently use in the sequel.
\begin{theorem}[\cite{BosKad}, Theorem 2.1]
For an operator $G \dopu X \to Y$ with $\|G\|=1$ the following
assertions are equivalent:
\begin{aequivalenz}
\item $G$ is a Daugavet center.

\item For every $y_0\in S_Y$, $x_0^* \in S_{X^*}$ and $\varepsilon >0$
there is an $x \in S(B_X,x_0^*, \eps)$ with $\|Gx+y_0\|>2-\varepsilon$.

\item For every $y_0\in S_Y$, $x_0^* \in S_{X^*}$ and $\varepsilon >0$
there is a $y^* \in S(B_{Y^*},y_0, \varepsilon)$ with $\|G^*y^*+x_0^*\|>2-\varepsilon$.
\end{aequivalenz}
\label{charDC}
\end{theorem}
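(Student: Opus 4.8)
The plan is to establish the equivalences by direct slice arguments, passing freely between $G$ and its adjoint $G^*$, and the whole proof rests on one elementary observation. If $a,b$ are vectors in a Banach space with $\|a\|\le 1$, $\|b\|\le 1$ and $\|a+b\|>2-\delta$, then any norming functional $\phi$ with $\|\phi\|=1$ and $\phi(a+b)=\|a+b\|$ satisfies $\phi(a)>1-\delta$ and $\phi(b)>1-\delta$, since each summand is at most $1$ and so neither can fall below $1-\delta$. Consequently, for every $\lambda\ge 0$ one gets $\|a+\lambda b\|\ge\phi(a+\lambda b)>(1+\lambda)(1-\delta)$, and this is exactly the mechanism that lets me trade the coefficient $1$ for an arbitrary coefficient $\lambda$. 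I would first record the routine reduction: writing a general rank-$1$ operator as $T=\lambda\,x_0^*\otimes y_0$ with $x_0^*\in S_{X^*}$, $y_0\in S_Y$, $\lambda\ge 0$ and $\|T\|=\lambda$, condition (i) amounts to the lower bound $\|G+\lambda\,x_0^*\otimes y_0\|\ge 1+\lambda$ for all such data, the reverse inequality being automatic from $\|G\|=1$ and the triangle inequality.

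For (ii)$\Rightarrow$(i) I fix $T=\lambda\,x_0^*\otimes y_0$ and $\varepsilon>0$, choose $x\in S(B_X,x_0^*,\varepsilon)$ with $\|Gx+y_0\|>2-\varepsilon$ as provided by (ii), and evaluate $G+T$ at $x$. Since $x_0^*(x)>1-\varepsilon$, the vector $(G+T)x=Gx+\lambda x_0^*(x)y_0$ differs from $Gx+\lambda y_0$ by at most $\lambda\varepsilon$ in norm, while the alignment observation applied to $a=Gx$, $b=y_0$ gives $\|Gx+\lambda y_0\|>(1+\lambda)(1-\varepsilon)$; letting $\varepsilon\to 0$ yields $\|G+T\|\ge 1+\lambda$. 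For the converse (i)$\Rightarrow$(ii) I apply the Daugavet-center identity to the norm-one operator $T=x_0^*\otimes y_0$, obtaining $\|G+T\|=2$ and hence some $x\in B_X$ with $\|Gx+x_0^*(x)y_0\|>2-\delta$; as $\|Gx\|\le 1$ and $|x_0^*(x)|\le 1$, this forces $|x_0^*(x)|>1-\delta$ and $\|Gx\|>1-\delta$. Replacing $x$ by $-x$ if necessary makes $x_0^*(x)>1-\delta$, so $x$ lands in the slice, and then $\|Gx+y_0\|\ge\|Gx+x_0^*(x)y_0\|-|1-x_0^*(x)|>2-2\delta$; taking $\delta=\varepsilon/2$ delivers (ii).

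The equivalence of (iii) follows from the identical template after dualizing. Here I use that $\|G+T\|=\|G^*+T^*\|$ and that the adjoint of $T=x_0^*\otimes y_0$ is the weak$^*$-continuous rank-$1$ operator $y^*\mapsto y^*(y_0)\,x_0^*$ on $Y^*$; expanding $\|G^*+T^*\|$ as $\sup_{y^*\in B_{Y^*}}\|G^*y^*+y^*(y_0)x_0^*\|$ turns condition (i) into a slice statement for $B_{Y^*}$ determined by $y_0$, which is precisely (iii). Running the two arguments of the previous paragraph with the triple $(x,x_0^*,Gx)$ replaced by $(y^*,y_0,G^*y^*)$ and the norming functional taken in $X^{**}$ establishes (i)$\Leftrightarrow$(iii). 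I expect the only genuinely delicate point to be the bookkeeping in the forward directions, where one must (a) justify passing to $-x$ (resp.\ $-y^*$) to secure membership in the slice rather than in its antipode, and (b) keep the two error terms---the gap between the coefficients $1$ and $\lambda$, and the deviation of $x_0^*(x)$ (resp.\ $y^*(y_0)$) from $1$---under simultaneous control so that both vanish as $\varepsilon\to 0$; the alignment observation is exactly what keeps these uniform.
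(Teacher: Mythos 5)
Your proof is correct: the alignment observation (a norming functional for $a+b$ with $\|a+b\|>2-\delta$ nearly norms both summands, hence $\|a+\lambda b\|>(1+\lambda)(1-\delta)$) is exactly the standard mechanism, and both the reduction of (i) to a lower bound for $\|G+\lambda\,x_0^*\otimes y_0\|$ and the sign-flip/error-bookkeeping in the forward directions are handled properly. Note that the paper itself does not prove this statement but cites it from the reference [BosKad]; your argument coincides in substance with the proof given there (which in turn follows the classical slice characterization of the Daugavet property in [KadSSW]), so there is nothing to add.
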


\section{Banach spaces denying the Daugavet property}

\begin{definition}
We say that $X$ \textit{denies the Daugavet property with a set} $A \subset S_X$ if there is an $\eps>0$ such that for every $y \in A$ there exists an $x^* \in S_{X^*}$ satisfying
\begin{equation}
\|\mathrm{Id}+x^*\otimes y\|<2-\eps.
\label{DaugEqFail}
\end{equation}
\label{Deny}
\end{definition}

\begin{definition}
We say that $X$ \textit{star-denies the Daugavet property with a set} $A \subset S_{X^*}$ if there is an $\eps>0$ such that for every $x^* \in A$ there exists a $y \in S_{X}$ satisfying~(\ref{DaugEqFail}).
\label{DenyAst}
\end{definition}

\begin{lemma}
For $A \subset S_X$ the following assertions are equivalent:
\begin{aequivalenz}
\item $X$ denies the Daugavet property with A.
\item There is an $\eps>0$ such that for every $y \in A$ a functional $x^* \in S_{X^*}$ may be chosen so that every $x \in S(B_{X},x^*,\eps)$ fulfills $\|x+y\|<2-\eps$. 
\item There is an $\eps>0$ such that for every $y \in A$ a functional $x^* \in S_{X^*}$ may be chosen so that every $y^* \in S(B_{X^*},y,\eps)$ fulfills $\|x^*+y^*\|<2-\eps$.
\end{aequivalenz}
\label{CharDeny}
\end{lemma}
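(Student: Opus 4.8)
The plan is to reduce everything to the elementary identity
$$
\|\mathrm{Id}+x^*\otimes y\|=\sup_{x\in B_X}\|x+x^*(x)y\|,
$$
valid for any $x^*\in S_{X^*}$ and $y\in S_X$, and then to play the defining quantity in~(\ref{DaugEqFail}) against the geometry of slices. The heart of the matter is a pair of complementary estimates linking the number $\|\mathrm{Id}+x^*\otimes y\|$ to the behaviour of $\|x+y\|$ on the slice $S(B_X,x^*,\eps)$; both equivalences then follow by bookkeeping of the constants.

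First I would record the easy estimate: if some $x\in S(B_X,x^*,\eps)$ satisfies $\|x+y\|\geq 2-\eps$, then $x^*(x)>1-\eps$ gives
$$
\|x+x^*(x)y\|\geq\|x+y\|-\bigl(1-x^*(x)\bigr)>2-2\eps,
$$
so $\|\mathrm{Id}+x^*\otimes y\|>2-2\eps$. Read contrapositively, with the constant in~(ii) taken to be half of the one in~(i), this is exactly the implication (i)$\Rightarrow$(ii): smallness of $\|\mathrm{Id}+x^*\otimes y\|$ forbids any point of the slice from having $\|x+y\|$ close to $2$.

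The reverse implication (ii)$\Rightarrow$(i) needs the harder estimate, which extracts a slice point from largeness of the operator norm. If $\|x+x^*(x)y\|>2-\delta$ for some $x\in B_X$, the triangle inequality forces both $\|x\|>1-\delta$ and $|x^*(x)|>1-\delta$, so $x^*(x)$ lies within $\delta$ of $+1$ or of $-1$. When $x^*(x)$ is near $+1$ the point $x$ already lies in $S(B_X,x^*,\delta)$ and $\|x+y\|>2-2\delta$; when $x^*(x)$ is near $-1$ I would pass to $-x$, which lies in the same slice and satisfies $\|-x+y\|=\|x-y\|>2-2\delta$ (the supremum above is visibly invariant under $x\mapsto -x$). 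This sign bookkeeping --- making sure the maximizing direction can always be steered into $S(B_X,x^*,\eps)$ rather than into its antipode --- is the main technical point; the rest is triangle-inequality arithmetic. Feeding the resulting slice point into~(ii) closes the loop (i)$\Leftrightarrow$(ii).

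For (i)$\Leftrightarrow$(iii) I would pass to adjoints. A direct computation gives $(x^*\otimes y)^*=\hat y\otimes x^*$ as an operator on $X^*$, where $\hat y\in X^{**}$ is the canonical image of $y$, so that
$$
\|\mathrm{Id}+x^*\otimes y\|=\|\mathrm{Id}_{X^*}+\hat y\otimes x^*\|=\sup_{y^*\in B_{X^*}}\|y^*+y^*(y)x^*\|.
$$
This is the previous expression with the roles of $X$ and $X^*$, and of $y$ and $x^*$, interchanged, and with ordinary slices replaced by the weak$^*$ slices $S(B_{X^*},y,\eps)$. Running the same two estimates inside $X^*$, with the $\pm$ analysis now applied to $y^*\mapsto -y^*$, yields (i)$\Leftrightarrow$(iii) with identical constant bookkeeping. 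As (i) is the common hub, the three assertions are equivalent.
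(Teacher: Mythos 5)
Your proposal is correct and follows essentially the same route as the paper: the implication (i)$\Rightarrow$(ii) is the paper's triangle-inequality estimate read contrapositively, your treatment of (ii)$\Rightarrow$(i) (any near-maximizer of $\|x+x^*(x)y\|$ has $|x^*(x)|$ close to $1$ and so lies, up to sign, in the slice) is just a reorganization of the paper's three-case analysis, and (i)$\Leftrightarrow$(iii) is handled identically via the adjoint. The constants differ slightly from the paper's but your bookkeeping is sound.
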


\begin{proof}
$(i)\Rightarrow(ii)$ We have that there is an $\eps>0$ such that for every $y \in A$ there exists an $x^* \in S_{X^*}$ satisfying
$$
\|\mathrm{Id}+x^*\otimes y\|=\sup_{x\in B_X} \|x+x^*(x)y\|<2-\eps.
$$
Hence $\|x+x^*(x)y\|<2-\eps$ for every $x\in B_X$. Let $x\in S(B_{X},x^*, \eps/2)$ then
$$\|x+y\| \leq \|x+x^*(x)y\|+\|y-x^*(x)y\|<2-\eps+|1-x^*(x)|\cdot \|y\|<2-\eps+\frac{\eps}{2}=2-\frac{\eps}{2}$$
which implies $(ii)$.

$(ii)\Rightarrow(i)$ Let $\eps$ and $x^*$ be from $(ii)$.
It is sufficient to show that $\|x+x^*(x)y\|\leq 2-\eps/2$ for every $x\in B_X$. Let $x\in S(B_{X},x^*,\eps/2)$ then 
$$\|x+x^*(x)y\|\leq \|x+y\|+\|y-x^*(x)y\|<2-\eps+|1-x^*(x)|\cdot \|y\|<2-\frac{\eps}{2}.$$
Let $x\in S(B_{X},-x^*,\eps/2)$ then $-x\in S(B_{X},x^*,\eps/2)$. Hence $\|x-y\|<2-\eps$ and
$$\|x+x^*(x)y\|\leq \|x-y\|+\|y+x^*(x)y\|<2-\eps+|1+x^*(x)|\cdot \|y\|<2-\frac{\eps}{2}.$$
Finally, let $x\in B_X\setminus\bigl(S(B_{X},x^*,\eps/2)\cup S(B_{X},-x^*,\eps/2)\bigr)$ then
$$
\|x+x^*(x)y\|\leq \|x\|+|x^*(x)|\cdot \|y\|\leq 2-\frac{\eps}{2}.
$$
The equivalence $(i)\Leftrightarrow(iii)$ can be proved in a very similar fashion to $(i)\Leftrightarrow(ii)$ using the fact that the norms of an operator and of its adjoint coincide.
\end{proof}

\begin{lemma} 
For $A \subset S_{X^*}$ the following assertions are equivalent:
\begin{aequivalenz}
\item $X$ star-denies the Daugavet property with A.
\item There is an $\eps>0$ such that for every $x^*\in A$ a vector $y\in S_{X}$ may be chosen so that every $x\in S(B_{X},x^*,\eps)$ fulfills $\|x+y\|<2-\eps$.
\item There is an $\eps>0$ such that for every $x^*\in A$ a vector $y\in S_{X}$ may be chosen so that every $y^*\in S(B_{X^*},y,\eps)$ fulfills $\|x^*+y^*\|<2-\eps$.
\end{aequivalenz}
\label{CharDenyAst}
\end{lemma}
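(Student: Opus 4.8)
The plan is to transcribe the proof of Lemma~\ref{CharDeny} almost verbatim, interchanging the roles played by the functional $x^*$ and the test vector $y$. Everything rests on the identity
$$\|\mathrm{Id}+x^*\otimes y\|=\sup_{x\in B_X}\|x+x^*(x)y\|,$$
where, for a fixed $\eps$, the definition of star-denial guarantees that for each $x^*\in A$ one may select $y\in S_X$ making this supremum strictly below $2-\eps$. Since the expression $\|x+x^*(x)y\|$ enters symmetrically in $(x^*,y)$, the same slice estimates that proved the three reformulations in Lemma~\ref{CharDeny} apply here with the pair swapped.

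For $(i)\Rightarrow(ii)$ I would fix $\eps$ and, given $x^*\in A$, take the $y\in S_X$ furnished by the definition, so that $\|x+x^*(x)y\|<2-\eps$ for every $x\in B_X$. Restricting to $x\in S(B_X,x^*,\eps/2)$ gives $x^*(x)>1-\eps/2$, whence
$$\|x+y\|\leq\|x+x^*(x)y\|+|1-x^*(x)|\cdot\|y\|<2-\eps+\frac{\eps}{2}=2-\frac{\eps}{2},$$
which is $(ii)$ after replacing $\eps$ by $\eps/2$. For the converse $(ii)\Rightarrow(i)$ I would split $B_X$ into the three regions $S(B_X,x^*,\eps/2)$, $S(B_X,-x^*,\eps/2)$, and the remaining band where $|x^*(x)|\leq 1-\eps/2$, and bound $\|x+x^*(x)y\|$ on each: on the first two by the triangle inequality together with the hypothesis applied to $x$ and to $-x$ respectively, and on the third directly by $\|x\|+|x^*(x)|\cdot\|y\|\leq 2-\eps/2$. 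Taking the supremum yields $\|\mathrm{Id}+x^*\otimes y\|\leq 2-\eps/2$, giving $(i)$.

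Finally, $(i)\Leftrightarrow(iii)$ follows by passing to adjoints. Since the norm of an operator equals that of its adjoint and the adjoint of $x^*\otimes y$ is the operator $z^*\mapsto z^*(y)x^*$ on $X^*$, one has $\|\mathrm{Id}+x^*\otimes y\|=\sup_{z^*\in B_{X^*}}\|z^*+z^*(y)x^*\|$. Here the evaluation at $y\in S_X$ plays the role that $x^*$ played above, so the two-sided slice estimate may be rerun over the weak$^*$ slices $S(B_{X^*},y,\eps)$, yielding the equivalence of $(i)$ and $(iii)$ by the identical computation. I do not expect a genuine obstacle: the content is routine once the symmetry is recognised. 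The only point requiring care is the bookkeeping of the repeated halving of $\eps$ so that a single $\eps$ serves uniformly over all $x^*\in A$, which is automatic because the $\eps$ supplied by star-denial is independent of $x^*$.
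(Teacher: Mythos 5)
Your proposal is correct and coincides with the paper's treatment: the paper simply states that Lemma~\ref{CharDenyAst} is proved the same way as Lemma~\ref{CharDeny}, and your write-up carries out exactly that transcription, with the roles of $x^*$ and $y$ interchanged and the three-region decomposition of $B_X$ (respectively of $B_{X^*}$ for $(i)\Leftrightarrow(iii)$ via the adjoint) handled as in the original argument.
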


One can prove Lemma~\ref{CharDenyAst} the same way as Lemma~\ref{CharDeny}. 
The following two lemmas form the main result of this section.

\begin{lemma}
Let there exist $\delta>0$ and $z^*\in S_{X^*}$ such that $X$ denies the Daugavet property with $S(B_{X},z^*,\delta)\cap S_X$. Then $X$ is not a Daugavet domain.
 \label{NoDCfromDeny}
\end{lemma}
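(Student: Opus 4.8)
The plan is to argue by contradiction. Suppose $X$ is a Daugavet domain, so there is a Daugavet center $G\dopu X\to Y$, which after rescaling we may assume satisfies $\|G\|=1$. The strategy is to feed the slice functional $z^*$ into the characterization of Theorem~\ref{charDC} in order to manufacture a unit vector lying in the very set $S(B_X,z^*,\delta)\cap S_X$ on which $X$ denies the Daugavet property, and then convert that denial into a rank-$1$ perturbation of $G$ that breaks~(\ref{eqDC}).

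Concretely, let $\eps>0$ be the constant from the denial hypothesis and fix any $y_0\in S_Y$. I would choose a tolerance $\eta\in(0,\min\{\eps,\delta\}]$ small enough that $\eta<1$ and apply Theorem~\ref{charDC}(ii) with $x_0^*=z^*$, obtaining $x\in S(B_X,z^*,\eta)$ with $\|Gx+y_0\|>2-\eta$. Since $\|y_0\|=1$ and $\|G\|=1$, this forces $\|x\|\ge\|Gx\|>1-\eta>0$, so the normalization $\hat x:=x/\|x\|$ is well defined. As $\|x\|\le 1$, one gets $z^*(\hat x)\ge z^*(x)>1-\eta\ge 1-\delta$, whence $\hat x\in S(B_X,z^*,\delta)\cap S_X$, and by the same token $\|G\hat x\|\ge\|Gx\|>1-\eta$. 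The denial hypothesis now supplies an $x^*\in S_{X^*}$ with $\|\mathrm{Id}+x^*\otimes\hat x\|<2-\eps$.

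The key step is to perturb $G$ not by an operator living on $X$ but by the \emph{image} direction: set $T:=x^*\otimes G\hat x$, a nonzero rank-$1$ operator since $\|G\hat x\|>1-\eta>0$. Factoring through $G$ gives, for every $u\in B_X$, the identity $Gu+x^*(u)G\hat x=G\bigl(u+x^*(u)\hat x\bigr)$, so that $\|G+T\|\le\|G\|\,\|\mathrm{Id}+x^*\otimes\hat x\|<2-\eps$. On the other hand, since $G$ is a Daugavet center, equation~(\ref{eqDC}) gives $\|G+T\|=\|G\|+\|T\|=1+\|x^*\|\,\|G\hat x\|=1+\|G\hat x\|>2-\eta\ge 2-\eps$. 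These two estimates collide, the required contradiction; hence no such $G$ exists.

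I expect the only genuine obstacle to be conceptual rather than computational: recognizing that the denial statement, which concerns $\mathrm{Id}_X$, can be transported to $G$ through the submultiplicative estimate $\|G+x^*\otimes Gv\|\le\|G\|\,\|\mathrm{Id}+x^*\otimes v\|$, and that the correct perturbing vector is $G\hat x$ rather than anything intrinsic to $X$. Everything else is bookkeeping on $\eta$: it must be small enough that $\hat x$ truly lands in the denial set $S(B_X,z^*,\delta)\cap S_X$, yet this same smallness keeps $\|G\hat x\|$ close enough to $1$ for the lower bound $2-\eta$ and the upper bound $2-\eps$ on $\|G+T\|$ to be incompatible.
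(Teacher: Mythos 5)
Your proof is correct, but its core mechanism differs from the paper's. Both arguments begin by locating a norm-one vector in the denial slice on which $G$ almost preserves the norm: you do this by assuming $G$ is a Daugavet center and invoking Theorem~\ref{charDC}(ii) once, whereas the paper avoids the initial contradiction hypothesis by splitting into two cases (either $\|Gz\|\leq 1-\eps/2$ on the whole slice, which kills condition (ii) of Theorem~\ref{charDC} outright, or some $z_0$ in the slice has $\|Gz_0\|>1-\eps/2$); these are logically the same step. The second halves genuinely diverge. The paper passes through Lemma~\ref{CharDeny}(ii) to turn the denial into a slice statement, approximates $Gz_0$ by a unit vector $y_0\in S_Y$, and shows that every $x$ in a suitable slice satisfies $\|y_0+Gx\|<2-\eps/2$, contradicting Theorem~\ref{charDC}(ii) a second time. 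You instead work straight from Definition~\ref{Deny}: you build the explicit rank-one operator $T=x^*\otimes G\hat x$, bound $\|G+T\|$ from above by factoring through $G$ via $Gu+x^*(u)G\hat x=G\bigl(u+x^*(u)\hat x\bigr)$ and the submultiplicative estimate $\|G+T\|\le\|G\|\,\|\mathrm{Id}+x^*\otimes\hat x\|<2-\eps$, and bound it from below by $(\ref{eqDC})$ as $1+\|G\hat x\|>2-\eta\ge 2-\eps$. Your bookkeeping on $\eta\le\min\{\eps,\delta\}$ is sound: it guarantees both that $\hat x$ lands in $S(B_X,z^*,\delta)\cap S_X$ and that the two bounds collide. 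What your route buys is economy --- it bypasses Lemma~\ref{CharDeny} entirely and produces a concrete rank-one witness violating the Daugavet equation, which is arguably the more transparent way to see why the lemma is true; what the paper's route buys is uniformity of language, since the slice reformulations of Lemmas~\ref{CharDeny} and~\ref{CharDenyAst} are the working tools throughout Sections 3--5 anyway.
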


\begin{proof}
According to Definition~\ref{DD} we must prove that any $G \dopu X \to Y$ is not a Daugavet center for any $Y$. It is easy to see that if $G$ is a Daugavet center then $G/\|G\|$ is as well, so we consider only the case $\|G\|=1$.

Take the $\eps$ from item (ii) of Lemma~\ref{CharDeny}. At first we show that if every $z\in S(B_{X},z^*,\delta)$ satisfies $\|Gz\|\leq 1-\eps/2$ then $G$ is not a Daugavet center.
Put $\eps_0:=\min \{\eps/2, \delta\}$, then for every $y \in S_Y$ and every $z\in S(B_{X},z^*,\eps_0)$ we have 
$$\|y+Gz\| \leq 1 + \|Gz\| \leq 2-\frac{\eps}{2}\leq 2-\eps_0.$$ 
Theorem~\ref{charDC}, item (ii) implies that $G$ is not a Daugavet center.
 
So, we suppose that there is a $z_0\in S(B_{X},z^*,\delta)$ with 
\begin{equation}
\|Gz_0\|>1-\frac{\eps}{2}.
 \label{eq:1}
\end{equation}
We can assume $\|z_0\|=1$, because if $z_0 \in B_X$ fulfills $z^*(z_0)>1-\delta$ and~(\ref{eq:1}) then $z_0/\|z_0\|$ does as well. In addition,~(\ref{eq:1}) implies that there is a $y_0\in S_Y$ with $\|y_0-Gz_0\|<\eps/2$. Since $X$ denies the Daugavet property with $S(B_{X},z^*,\delta)\cap S_X$, there is an $x^*\in S_{X^*}$ such that every $x\in S(B_{X},x^*,\eps)$ satisfies $\|x+z_0\|<2-\eps$. Hence for every $x\in S(B_{X},x^*,\eps)$ we have
$$\|y_0+Gx\| \leq \|y_0-Gz_0\|+\|Gx+Gz_0\|<\frac{\eps}{2}+\|x+z_0\|<2-\frac{\eps}{2}.$$
By Theorem~\ref{charDC}, item (ii) $G$ is not a Daugavet center.
\end{proof}

\begin{lemma} 
Let there exist $\delta>0$ and $z\in S_{X}$ such that $X$ star-denies the Daugavet property with $S(B_{X^*},z,\delta)\cap S_{X^*}$. Then $X$ is not a Daugavet range.
\label{NoDCtoDenyAst}
\end{lemma}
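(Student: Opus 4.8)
The plan is to mirror the proof of Lemma~\ref{NoDCfromDeny}, passing everywhere to adjoints: where that argument probed $G$ on a slice of $B_X$ via item (ii) of Theorem~\ref{charDC}, here I will probe $G^*$ on a weak$^*$ slice of $B_{X^*}$ via item (iii). Let $G\dopu E\to X$ be arbitrary; since $G$ is a Daugavet center if and only if $G/\|G\|$ is, I may assume $\|G\|=1$, whence $\|G^*\|=1$ as well. Fix the $\eps>0$ furnished by star-denying, in the reformulation given by item (iii) of Lemma~\ref{CharDenyAst}. To show $G$ is not a Daugavet center I will negate item (iii) of Theorem~\ref{charDC} (applied with $X\to E$, $Y\to X$): I must produce a $y_0\in S_X$, an $x_0^*\in S_{E^*}$ and an $\eps'>0$ so that $\|G^*y^*+x_0^*\|\le 2-\eps'$ for every $y^*\in S(B_{X^*},y_0,\eps')$.

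The argument splits according to the size of $\|G^*x^*\|$ on the weak$^*$ slice. First I treat the case where $\|G^*x^*\|\le 1-\eps/2$ for all $x^*\in S(B_{X^*},z,\delta)$. Setting $\eps_0:=\min\{\eps/2,\delta\}$, for a fixed $x_0^*\in S_{E^*}$ and any $x^*\in S(B_{X^*},z,\eps_0)\subseteq S(B_{X^*},z,\delta)$ the triangle inequality gives $\|G^*x^*+x_0^*\|\le 1+\|G^*x^*\|\le 2-\eps_0$; taking $y_0=z$ and $\eps'=\eps_0$ negates item (iii) of Theorem~\ref{charDC}, so $G$ is not a Daugavet center.

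Otherwise there is $x_1^*\in S(B_{X^*},z,\delta)$ with $\|G^*x_1^*\|>1-\eps/2$. As in Lemma~\ref{NoDCfromDeny}, a normalization lets me take $\|x_1^*\|=1$: dividing by $\|x_1^*\|\le 1$ preserves both $x_1^*(z)>1-\delta$ and the lower bound on $\|G^*x_1^*\|$, so $x_1^*\in S(B_{X^*},z,\delta)\cap S_{X^*}$. The inequality $\|G^*x_1^*\|>1-\eps/2$ then yields an $x_0^*\in S_{E^*}$ with $\|x_0^*-G^*x_1^*\|<\eps/2$ (take $x_0^*=G^*x_1^*/\|G^*x_1^*\|$). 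Because $X$ star-denies the Daugavet property with $S(B_{X^*},z,\delta)\cap S_{X^*}$, item (iii) of Lemma~\ref{CharDenyAst} provides a $y_0\in S_X$ such that $\|x_1^*+y^*\|<2-\eps$ for every $y^*\in S(B_{X^*},y_0,\eps)$. For such $y^*$ I estimate
$$
\|G^*y^*+x_0^*\|\le\|x_0^*-G^*x_1^*\|+\|G^*(x_1^*+y^*)\|<\frac{\eps}{2}+\|x_1^*+y^*\|<2-\frac{\eps}{2}.
$$
Since $S(B_{X^*},y_0,\eps/2)\subseteq S(B_{X^*},y_0,\eps)$, this bound holds on the smaller slice as well, so taking $\eps'=\eps/2$ again negates item (iii) of Theorem~\ref{charDC}.

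The routine parts are the two triangle-inequality estimates and the normalization. The one step deserving care is the choice of characterizations: the whole argument hinges on pairing the \emph{adjoint} criterion (item (iii) of Theorem~\ref{charDC}) with the \emph{weak$^*$} reformulation of star-denying (item (iii) of Lemma~\ref{CharDenyAst}), so that both statements speak about functionals $y^*$ ranging over weak$^*$ slices $S(B_{X^*},y_0,\cdot)$ and about $G^*$ rather than $G$. Matching these dual pictures correctly, and keeping track of whether $\eps$ or $\eps/2$ is the parameter to feed into Theorem~\ref{charDC}, is where an error would most easily creep in.
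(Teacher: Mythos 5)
Your proof is correct and is exactly the argument the paper intends: the paper only remarks that Lemma~\ref{NoDCtoDenyAst} follows ``in a very similar fashion'' to Lemma~\ref{NoDCfromDeny} using item (iii) of Lemma~\ref{CharDenyAst} and item (iii) of Theorem~\ref{charDC}, and you have carried out precisely that dualization, with the same case split on $\|G^*x^*\|$ over the weak$^*$ slice and the same $\eps/2$ bookkeeping.
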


Using item (iii) of Lemma~\ref{CharDenyAst} and item (iii) of Theorem~\ref{charDC} one can prove Lemma~\ref{NoDCtoDenyAst} in a very similar fashion to Lemma~\ref{NoDCfromDeny}.

\section{Two-dimensional lattices denying the positive Daugavet property}

\begin{definition}
We say that $F$ \textit{denies the positive Daugavet property with} $A\subset S_{F}^+$ if there is  an $\eps>0$ such that for every $a \in A$ there exists an $f^* \in S_{F^*}^+$ satisfying  \begin{equation}
\|\mathrm{Id}+f^*\otimes a\|<2-\eps.
\label{DaugEqFailF}
\end{equation}
 \label{LatticeDeny}
\end{definition}

\begin{definition} 
We say that $F$ \textit{star-denies the positive Daugavet property with} $A\subset S_{F^*}^+$ if there is an $\eps>0$ such that for every $f^* \in A$ there exists an $a \in S_{F}^+$ satisfying~(\ref{DaugEqFailF}).
\label{LatticeDenyAst} 
\end{definition}

The following two lemmas are complete analogs of Lemmas~\ref{CharDeny} and~\ref{CharDenyAst}, so we skip their proofs.

\begin{lemma} 
For $A\subset S_F^+$ the following assertions are equivalent:
\begin{aequivalenz}
\item $F$ denies the positive Daugavet property with $A$.
\item There is an $\eps>0$ such that for every $a\in A$ a functional $f^*\in S_{F^*}^+$ may be chosen so that every $b\in S(B_{F},f^*,\eps)\cap B_F^+$ fulfills $\|a+b\|<2-\eps$.  
\item There is an $\eps>0$ such that for every $a\in A$ a functional $f^*\in S_{F^*}^+$ may be chosen so that every $g^*\in S(B_{F^*},a,\eps)\cap B_{F^*}^+$ fulfills $\|f^*+g^*\|<2-\eps$. 
\end{aequivalenz}
\label{CharLatticeDeny}
\end{lemma}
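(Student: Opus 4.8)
The plan is to follow the proof of Lemma~\ref{CharDeny} essentially line by line, substituting $f^*$ for $x^*$, $a$ for $y$, and unwinding the operator norm in~(\ref{DaugEqFailF}) as $\|\mathrm{Id}+f^*\otimes a\|=\sup_{b\in B_F}\|b+f^*(b)a\|$, while keeping track of the positivity constraints $f^*\in S_{F^*}^+$, $a\in S_F^+$ and of the intersections with the positive cone in the slices. The implication $(i)\Rightarrow(ii)$ transcribes verbatim: $(i)$ yields $\|b+f^*(b)a\|<2-\eps$ for every $b\in B_F$, and restricting to $b\in S(B_F,f^*,\eps/2)\cap B_F^+$, where $f^*(b)>1-\eps/2$ forces $|1-f^*(b)|<\eps/2$, the estimate $\|a+b\|\le\|b+f^*(b)a\|+|1-f^*(b)|\,\|a\|$ gives $\|a+b\|<2-\eps/2$, i.e. $(ii)$ with constant $\eps/2$. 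The equivalence $(i)\Leftrightarrow(iii)$ I would obtain by running the same reasoning in the dual lattice $F^*$, which again carries an absolute (hence monotone) norm of the form~(\ref{ModulNormF}), after recording that $\|\mathrm{Id}_F+f^*\otimes a\|=\|\mathrm{Id}_{F^*}+a\otimes f^*\|$ because the norm of an operator and of its adjoint coincide.

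The only step that is not a mechanical copy is $(ii)\Rightarrow(i)$, and this is exactly where the positivity restrictions bite. In Lemma~\ref{CharDeny} the bound $\|x+x^*(x)y\|\le 2-\eps/2$ was proved by splitting $B_X$ into the slice $S(B_X,x^*,\eps/2)$, the \emph{opposite} slice $S(B_X,-x^*,\eps/2)$, and the remainder. Here that split is unavailable, since passing from $b$ to $-b$ to reach the opposite slice leaves the positive cone, and $(ii)$ speaks only about $b\in B_F^+$. I would instead exploit the lattice structure to reduce an arbitrary $b$ to its modulus. Concretely, for every $b\in B_F$, every $f^*\in S_{F^*}^+$ and $a\in S_F^+$ one has, componentwise,
\[
|b+f^*(b)a|\ \le\ |b|+|f^*(b)|\,a\ \le\ |b|+f^*(|b|)\,a,
\]
the first inequality by the triangle inequality and $a\ge 0$, the second because $f^*\ge 0$ gives $|f^*(b)|\le f^*(|b|)$. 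Since the norm of $F$ is monotone by~(\ref{ModulNormF}) and $\||b|\|=\|b\|\le 1$ makes $|b|\in B_F^+$, this yields
\[
\|b+f^*(b)a\|=\bigl\||b+f^*(b)a|\bigr\|\le\bigl\||b|+f^*(|b|)\,a\bigr\|,
\]
so it suffices to bound the right-hand side over all $|b|\in B_F^+$.

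For this remaining bound I would use the genuine two-case analysis that survives in the positive cone. If $|b|\in S(B_F,f^*,\eps/2)\subseteq S(B_F,f^*,\eps)$, then $(ii)$ gives $\|a+|b|\|<2-\eps$, and combining this with $|1-f^*(|b|)|<\eps/2$ produces $\||b|+f^*(|b|)a\|<2-\eps/2$ exactly as in the slice case of Lemma~\ref{CharDeny}; if instead $f^*(|b|)\le 1-\eps/2$, the crude estimate $\||b|+f^*(|b|)a\|\le\||b|\|+f^*(|b|)\,\|a\|\le 2-\eps/2$ closes the argument. Taking $\eps/2$ as the constant in Definition~\ref{LatticeDeny} gives $(i)$. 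The main obstacle is therefore not any delicate estimate but the structural observation that the sign-based three-way case split of the non-positive setting must be replaced by the modulus reduction above; this is the single place where hypothesis~(\ref{ModulNormF}) and the positivity of both $f^*$ and $a$ are genuinely used, and it is precisely what makes the present lemma a faithful analog of Lemma~\ref{CharDeny} despite the extra constraints. The dual step needed for $(i)\Leftrightarrow(iii)$ is then the same modulus reduction carried out in $F^*$.
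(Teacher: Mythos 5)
Your proof is correct and follows the route the paper intends: the paper omits this proof, declaring it a ``complete analog'' of Lemma~\ref{CharDeny}, and your write-up is exactly that analog. In particular, your modulus reduction $\|b+f^*(b)a\|\le\bigl\|\,|b|+f^*(|b|)a\,\bigr\|$ is precisely the right replacement for the opposite-slice case in $(ii)\Rightarrow(i)$, i.e.\ the one genuinely non-mechanical step that the paper's cross-reference glosses over.
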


\begin{lemma} 
For $A\subset S_{F^*}^+$ the following assertions are equivalent:
\begin{aequivalenz}
\item $F$ star-denies the positive Daugavet property with $A$.
\item There is an $\eps>0$ such that for every $f^*\in A$ a vector $a\in S_{F}^+$ may be chosen so that every $b\in S(B_{F},f^*,\eps)\cap B_F^+$ fulfills $\|a+b\|<2-\eps$. 
\item There is an $\eps>0$ such that for every $f^*\in A$ a vector $a\in S_{F}^+$ may be chosen so that every $g^*\in S(B_{F^*},a,\eps)\cap B_{F^*}^+$ fulfills $\|f^*+g^*\|<2-\eps$. 
\end{aequivalenz}
\label{CharLatticeDenyAst}
\end{lemma}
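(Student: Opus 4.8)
The plan is to imitate the proof of Lemma~\ref{CharDeny} line by line, the only genuinely new ingredient being a reduction to the positive cone that uses the lattice structure of $F$. I would first record the following observation, which I expect to be the crux of the argument: for $f^*\in S_{F^*}^+$ and $a\in S_F^+$,
$$
\|\mathrm{Id}+f^*\otimes a\|=\sup_{b\in B_F}\|b+f^*(b)a\|=\sup_{b\in B_F^+}\|b+f^*(b)a\|.
$$
To prove it, fix $b=(b_1,b_2)\in B_F$ and pass to $|b|=(|b_1|,|b_2|)$, which lies in $B_F^+$ since $\||b|\|=\|b\|$ by~(\ref{ModulNormF}). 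As $f^*\geq 0$ we have $f^*(|b|)\geq|f^*(b)|$, and as $a\geq 0$ each coordinate satisfies $|b_i+f^*(b)a_i|\leq|b_i|+f^*(|b|)a_i$; the vector $|b|+f^*(|b|)a$ being nonnegative, the monotonicity of $\|\cdot\|$ recorded after~(\ref{ModulNormF}) yields $\|b+f^*(b)a\|\leq\||b|+f^*(|b|)a\|$. This is the only place where the lattice hypothesis enters, so it is the step that needs care; everything else is formal mimicry of Lemma~\ref{CharDeny}.

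Granting the reduction, $(i)\Rightarrow(ii)$ is immediate. Taking the $\eps$ and, for each $f^*\in A$, the vector $a\in S_F^+$ furnished by~(i), the bound $\|b+f^*(b)a\|<2-\eps$ holds for all $b\in B_F$, in particular for $b\in S(B_F,f^*,\eps/2)\cap B_F^+$, whence
$$
\|a+b\|\leq\|b+f^*(b)a\|+|1-f^*(b)|\cdot\|a\|<2-\eps+\frac{\eps}{2}=2-\frac{\eps}{2},
$$
which is~(ii) after renaming $\eps/2$ as $\eps$.

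For $(ii)\Rightarrow(i)$ I would use the reduction to test the operator norm only on $b\in B_F^+$; here positivity collapses the three-case analysis of Lemma~\ref{CharDeny} to two cases, because $f^*(b)\geq 0$ for $b\geq 0$ eliminates the negative slice. Fix $f^*\in A$ and let $a\in S_F^+$ be as in~(ii). If $b\in B_F^+$ with $f^*(b)>1-\eps/2$, then $b\in S(B_F,f^*,\eps)\cap B_F^+$, so~(ii) gives $\|a+b\|<2-\eps$ and hence $\|b+f^*(b)a\|\leq\|a+b\|+(1-f^*(b))\|a\|<2-\frac{\eps}{2}$; if instead $f^*(b)\leq 1-\eps/2$, then $\|b+f^*(b)a\|\leq\|b\|+f^*(b)\|a\|\leq 2-\frac{\eps}{2}$ directly. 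Taking the supremum over $B_F^+$ and invoking the reduction gives $\|\mathrm{Id}+f^*\otimes a\|\leq 2-\frac{\eps}{2}<2-\frac{\eps}{4}$, so~(i) holds with $\eps/4$.

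Finally, I would obtain $(i)\Leftrightarrow(iii)$ by running the scheme of $(i)\Leftrightarrow(ii)$ on the adjoint operator. Since $\|\mathrm{Id}+f^*\otimes a\|=\|(\mathrm{Id}+f^*\otimes a)^*\|=\|\mathrm{Id}_{F^*}+a\otimes f^*\|$, and $F^*$ is again a two-dimensional lattice — the dual of an absolute norm on $\R^2$ is absolute — the positive-cone reduction and the two-case estimate apply verbatim in $F^*$, with $f^*$ now playing the role of the vector and $a\in S_{F^{**}}^+=S_F^+$ that of the functional; the resulting slice condition over $S(B_{F^*},a,\eps)\cap B_{F^*}^+$ is precisely~(iii). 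This is exactly the adjoint device used for $(i)\Leftrightarrow(iii)$ in Lemma~\ref{CharDeny}, so no new idea is required.
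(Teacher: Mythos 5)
Your proof is correct and follows essentially the route the paper intends: the paper omits the proof, declaring the lemma a ``complete analog'' of Lemma~\ref{CharDeny}, and your argument is exactly that analog carried out in the positive cone. The one genuinely new ingredient you supply --- the reduction $\sup_{b\in B_F}\|b+f^*(b)a\|=\sup_{b\in B_F^+}\|b+f^*(b)a\|$ via $b\mapsto(|b_1|,|b_2|)$, the inequality $|f^*(b)|\le f^*(|b|)$, and the monotonicity of the absolute norm --- is precisely the detail needed to replace the negative-slice case in the proof of Lemma~\ref{CharDeny} (which would require $-b\in B_F^+$ and so does not transfer literally), and you have verified it correctly.
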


Recall that $F^*=\R^2$ with the norm $$\|(f_1,f_2)\|_{F^*}:=\max_{(a_1,a_2)\in B_F}  |f_1a_1+f_2a_2|$$ and $F^{**}=F$. We introduce an order on $F^*$ the same way as on $F$. It is easy to see that $\|(1,0)\|_{F^*}=\|(0,1)\|_{F^*}=1$ and $\|(f_1,f_2)\|_{F^*}=\|(|f_1|,|f_2|)\|_{F^*}$ for every $(f_1,f_2)\in F^*$. Hence $F^*$ is a two-dimensional lattice as well.
Lemmas~\ref{CharLatticeDeny} and~\ref{CharLatticeDenyAst} evidently imply the following fact (which one can easily deduce from Definitions~\ref{LatticeDeny} and~\ref{LatticeDenyAst} as well).

\begin{lemma} 
Let $A \subset S_{F}^+$ and $\tilde{A} \subset S_{F^*}^+$. 
\begin{statements}
\item If $F$ denies the positive Daugavet property with $A$ then $F^*$ star-denies the positive Daugavet property with~$A$.
\item If $F$ star-denies the positive Daugavet property with $\tilde{A}$ then $F^*$ denies the positive Daugavet property with~$\tilde{A}$.
\end{statements}
\label{Switch}
\end{lemma}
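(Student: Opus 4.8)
The plan is to prove Lemma~\ref{Switch} directly from the definitions, exploiting the duality $F^{**}=F$ together with the observation that the quantity controlling both denial notions is essentially symmetric in $F$ and $F^*$. The key is to unwind the operator norm $\|\mathrm{Id}+f^*\otimes a\|$ appearing in~(\ref{DaugEqFailF}) and show that it admits a dual description in terms of the adjoint operator on $F^*$. Concretely, for $a\in S_F^+$ and $f^*\in S_{F^*}^+$, the rank-one operator $f^*\otimes a$ on $F$ has adjoint $(f^*\otimes a)^*=a\otimes f^*$ acting on $F^*$ (here I use $F^{**}=F$ to regard $a$ as an element of $F^{**}=F$, so that $a\otimes f^*\dopu F^*\to F^*$). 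Since the norm of an operator equals the norm of its adjoint, we have
$$
\|\mathrm{Id}_F+f^*\otimes a\|=\|(\mathrm{Id}_F+f^*\otimes a)^*\|=\|\mathrm{Id}_{F^*}+a\otimes f^*\|.
$$
This identity is the engine of the whole argument.

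With this identity in hand, statement (a) becomes almost immediate. Suppose $F$ denies the positive Daugavet property with $A\subset S_F^+$, witnessed by some $\eps>0$: for every $a\in A$ there is an $f^*\in S_{F^*}^+$ with $\|\mathrm{Id}_F+f^*\otimes a\|<2-\eps$. By the displayed identity this reads $\|\mathrm{Id}_{F^*}+a\otimes f^*\|<2-\eps$. Now I must check that this is exactly the condition for $F^*$ to star-deny the positive Daugavet property with $A$. Reading Definition~\ref{LatticeDenyAst} with $F$ replaced by $F^*$: we need, for every element $\varphi^*$ of the prescribed subset of $S_{(F^*)^*}^+=S_F^+$, a vector $\alpha\in S_{F^*}^+$ satisfying $\|\mathrm{Id}_{F^*}+\alpha\otimes\varphi^*\|<2-\eps$. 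Taking $\varphi^*=a$ (which lies in $S_{F^{**}}^+=S_F^+$, matching the index set $A$) and $\alpha=f^*\in S_{F^*}^+$, the required inequality $\|\mathrm{Id}_{F^*}+f^*\otimes a\|<2-\eps$ is precisely what we derived. Hence $F^*$ star-denies the positive Daugavet property with $A$, as claimed.

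The proof of statement (b) is entirely symmetric. Assuming $F$ star-denies the positive Daugavet property with $\tilde A\subset S_{F^*}^+$ yields, for each $f^*\in\tilde A$, a vector $a\in S_F^+$ with $\|\mathrm{Id}_F+f^*\otimes a\|<2-\eps$; applying the same adjoint identity and re-reading Definition~\ref{LatticeDeny} with $F$ replaced by $F^*$ shows that $F^*$ denies the positive Daugavet property with $\tilde A$. Here one uses again $F^{**}=F$ so that the index set $\tilde A\subset S_{F^*}^+$ is exactly the set of functionals on $F^*$ over which the denial quantifier ranges.

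I expect the only genuine subtlety to be bookkeeping: one must track carefully which space plays the role of ``$X$'' in each definition and verify that the positivity constraints ($a\geq 0$, $f^*\geq 0$) are preserved under the identification $F^{**}=F$. Since the order on $F^*$ was introduced coordinatewise exactly as on $F$, and $F^{**}=F$ isometrically and order-isomorphically, positivity transfers without trouble; the single constant $\eps$ carries through unchanged, which is why no loss occurs. The alternative route through Lemmas~\ref{CharLatticeDeny} and~\ref{CharLatticeDenyAst} would instead match item (iii) of one lemma against item (i) of the other—observing that the slice condition $\|f^*+g^*\|<2-\eps$ for $g^*\in S(B_{F^*},a,\eps)\cap B_{F^*}^+$ is manifestly symmetric under swapping $F$ and $F^*$—but the direct adjoint-norm computation above is the cleanest path, so that is the one I would write out.
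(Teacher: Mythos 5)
Your proof is correct, and it is precisely the ``deduce it directly from Definitions~\ref{LatticeDeny} and~\ref{LatticeDenyAst}'' route that the paper explicitly sanctions in the sentence preceding the lemma (the paper itself gives no written proof, only noting the fact follows evidently either from Lemmas~\ref{CharLatticeDeny} and~\ref{CharLatticeDenyAst} or from the definitions). The adjoint identity $\|\mathrm{Id}_F+f^*\otimes a\|=\|\mathrm{Id}_{F^*}+a\otimes f^*\|$, the preservation of positivity under $F^{**}=F$, and the unchanged constant $\eps$ are exactly the points that need checking, and you check them correctly.
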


Here is the key lemma of this section. In its proof we use the idea from Theorem 5.1 of~\cite{BKSW}.
\begin{lemma} 
Let there exist $w^*\in S_{F^*}^+$ and $\delta>0$ such that $F$ denies the positive Daugavet property with $S(B_{F},w^*,\delta)\cap S_F^+$. Then $X_1\oplus_F X_2$ is not a Daugavet domain for any $X_1$ and $X_2$.
\label{CorFDeny}
\end{lemma}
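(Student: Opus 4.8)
The plan is to deduce the result from Lemma~\ref{NoDCfromDeny}: it suffices to produce a functional $z^*\in S_{X^*}$ for which $X:=X_1\oplus_F X_2$ denies the Daugavet property with the slice $S(B_{X},z^*,\delta)\cap S_X$. The guiding idea is that every $x=(x_1,x_2)\in X$ is controlled by its \emph{profile} $(\|x_1\|,\|x_2\|)\in S_F^+$ (indeed $\|(\|x_1\|,\|x_2\|)\|_F=\|x\|_X$), and that the positive functionals on $F$ lift to $X$ through a fixed pair of norm-one functionals $\phi_1\in S_{X_1^*}$, $\phi_2\in S_{X_2^*}$. One then transports the lattice denial hypothesis on $F$ up to $X$ along this correspondence, using throughout the standard identification $(X_1\oplus_F X_2)^*=X_1^*\oplus_{F^*}X_2^*$.

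Writing $w^*=(w_1^*,w_2^*)$, I would first set $z^*:=(w_1^*\phi_1,w_2^*\phi_2)$, so that $z^*(x_1,x_2)=w_1^*\phi_1(x_1)+w_2^*\phi_2(x_2)$; then $\|z^*\|_{X^*}=\|w^*\|_{F^*}=1$, hence $z^*\in S_{X^*}$. The key observation is that, since $w_1^*,w_2^*\ge 0$ and $\phi_i(x_i)\le\|x_i\|$,
$$
z^*(x)\ \le\ w_1^*\|x_1\|+w_2^*\|x_2\|\ =\ w^*\bigl((\|x_1\|,\|x_2\|)\bigr)\ \le\ \|w^*\|_{F^*}\,\|x\|_X .
$$
Consequently, for any $x\in S(B_{X},z^*,\delta)\cap S_X$ its profile $a:=(\|x_1\|,\|x_2\|)$ lies in $S_F^+$ and satisfies $w^*(a)\ge z^*(x)>1-\delta$, so $a\in S(B_F,w^*,\delta)\cap S_F^+$.

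Let $\eps>0$ be the uniform constant furnished by the hypothesis via item~(ii) of Lemma~\ref{CharLatticeDeny}. For a fixed $x$ as above, apply that item to $a$ to obtain $f^*=(f_1^*,f_2^*)\in S_{F^*}^+$ such that every $b\in S(B_F,f^*,\eps)\cap B_F^+$ fulfils $\|a+b\|_F<2-\eps$. Now lift $f^*$ by setting $x^*:=(f_1^*\psi_1,f_2^*\psi_2)$ for fixed $\psi_i\in S_{X_i^*}$; again $\|x^*\|_{X^*}=\|f^*\|_{F^*}=1$. For an arbitrary $u=(u_1,u_2)\in S(B_{X},x^*,\eps)$, put $b:=(\|u_1\|,\|u_2\|)\in B_F^+$. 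Positivity of $f^*$ gives $x^*(u)\le f_1^*\|u_1\|+f_2^*\|u_2\|=f^*(b)$, whence $f^*(b)>1-\eps$ and $b\in S(B_F,f^*,\eps)\cap B_F^+$; while the triangle inequality combined with monotonicity of the lattice norm gives
$$
\|u+x\|_X=\bigl\|(\|u_1+x_1\|,\|u_2+x_2\|)\bigr\|_F\le\bigl\|(\|u_1\|+\|x_1\|,\|u_2\|+\|x_2\|)\bigr\|_F=\|a+b\|_F<2-\eps .
$$
Thus every $u\in S(B_{X},x^*,\eps)$ satisfies $\|u+x\|<2-\eps$, so by item~(ii) of Lemma~\ref{CharDeny} the space $X$ denies the Daugavet property with $S(B_{X},z^*,\delta)\cap S_X$, and Lemma~\ref{NoDCfromDeny} yields the conclusion.

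The routine parts are the two norm identities for $z^*$ and $x^*$ (equivalently, the duality $(X_1\oplus_F X_2)^*=X_1^*\oplus_{F^*}X_2^*$) together with the monotonicity estimate for the lattice norm. The point requiring care is \emph{uniformity} of the constant: the same $\eps$ must serve every $x$ in the slice. This is precisely what the hypothesis grants, since the lattice denial supplies a single $\eps$ valid on all of $S(B_F,w^*,\delta)\cap S_F^+$, and every profile $a$ coming from the slice lands there; the auxiliary functionals $f^*$ and $x^*$ are permitted to depend on $x$, whereas $\eps$ is not. Beyond this bookkeeping there is no real obstacle, the conceptual content being the profile correspondence that transports the denial on the two-dimensional lattice $F$ to the sum $X$.
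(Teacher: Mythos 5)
Your proof is correct and follows essentially the same route as the paper: lift $w^*$ and the functionals $f^*$ supplied by Lemma~\ref{CharLatticeDeny}(ii) to norm-one functionals on $X_1\oplus_F X_2$, pass to the profile $(\|x_1\|,\|x_2\|)$ to transfer slices between $X$ and $F$, and conclude via Lemma~\ref{CharDeny}(ii) and Lemma~\ref{NoDCfromDeny}. The only cosmetic difference is that you realize the lifted functionals explicitly as $(w_1^*\phi_1,w_2^*\phi_2)$ and $(f_1^*\psi_1,f_2^*\psi_2)$, where the paper merely picks any $z^*$, $x^*$ with the prescribed component norms; your remark on the uniformity of $\eps$ is exactly the point the paper's argument relies on.
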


\begin{proof}
It is easy to see that $(X_1\oplus_F X_2)^*=X_1^*\oplus_{F^*} X_2^*$ for every $X_1$ and $X_2$. Pick a $z^*=(z_1^*,z_2^*)\in S_{(X_1\oplus_F X_2)^*}$ with $(\|z_1^*\|,\|z_2^*\|)=w^*$. Then for a $y=(y_1,y_2)\in S(B_{X_1\oplus_F X_2},z^*,\delta)\cap S_{X_1\oplus_F X_2}$ we have 
$$\|z_1^*\|\|y_1\|+\|z_2^*\|\|y_2\|\geq z_1^*(y_1)+z_2^*(y_2)=z^*(y)>1-\delta.$$
Hence $a:=(\|y_1\|,\|y_2\|)\in S(B_{F},w^*,\delta)\cap S_F^+$. By item (ii) of Lemma~\ref{CharLatticeDeny} there exist $\eps>0$ and $f^* \in S_{F^*}^+$ such that every $b \in S(B_{F},f^*,\eps)\cap B_F^+$ satisfies $\|a+b\|<2-\eps$.

Pick an $x^*=(x_1^*,x_2^*)\in S_{(X_1\oplus_F X_2)^*}$  with $(\|x_1^*\|,\|x_2^*\|)=f^*$. Then for every $x=(x_1,x_2)\in S(B_{X_1\oplus_F X_2},x^*,\eps)$ we have $b_x:=(\|x_1\|,\|x_2\|)\in S(B_{F},f^*,\eps)\cap B_F^+$ and therefore
\begin{eqnarray*}
\|x+y\|&=&\|(\|x_1+y_1\|,\|x_2+y_2\|)\| \\
&\leq&\|(\|x_1\|+\|y_1\|,\|x_2\|+\|y_2\|)\|=\|a+b_x\|<2-\eps.
\end{eqnarray*}
By item (ii) of Lemma~\ref{CharDeny} $X_1\oplus_F X_2$ denies the Daugavet property for $S(B_{X_1\oplus_F X_2},z^*,\delta)\cap S_{X_1\oplus_F X_2}$. So, Lemma~\ref{NoDCfromDeny} implies that $X_1\oplus_F X_2$ is not a Daugavet domain. 
\end{proof}

The same conclusions based on item (iii) of Lemma~\ref{CharLatticeDenyAst}, item (iii) of Lemma~\ref{CharDenyAst}, and Lemma~\ref{NoDCtoDenyAst} prove the following fact:

\begin{lemma} 
Let there exist $w\in S_{F}^+$ and $\delta>0$ such that $F$ star-denies the positive Daugavet property with $S(B_{F^*},w,\delta)\cap S_{F^*}^+$. Then $X_1\oplus_F X_2$ is not a Daugavet range for any $X_1$ and $X_2$. 
\label{CorFDenyAst}
\end{lemma}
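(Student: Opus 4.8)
The plan is to dualize the proof of Lemma~\ref{CorFDeny} line by line, replacing every primal object by its weak$^*$ analogue and aiming for Lemma~\ref{NoDCtoDenyAst} instead of Lemma~\ref{NoDCfromDeny}. As before I would use the identification $(X_1\oplus_F X_2)^*=X_1^*\oplus_{F^*}X_2^*$, and the goal is to exhibit a vector $z\in S_{X_1\oplus_F X_2}$ such that $X:=X_1\oplus_F X_2$ star-denies the Daugavet property with the weak$^*$ slice $S(B_{X^*},z,\delta)\cap S_{X^*}$; then Lemma~\ref{NoDCtoDenyAst} immediately gives that $X$ is not a Daugavet range.

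First I would pick $z=(z_1,z_2)\in S_{X_1\oplus_F X_2}$ whose coordinate profile $(\|z_1\|,\|z_2\|)$ equals the given $w\in S_F^+$ (possible since neither $X_i$ is trivial). For any $y^*=(y_1^*,y_2^*)\in S(B_{X^*},z,\delta)\cap S_{X^*}$ the estimate $\|y_1^*\|\,\|z_1\|+\|y_2^*\|\,\|z_2\|\ge y_1^*(z_1)+y_2^*(z_2)=y^*(z)>1-\delta$, together with $\|y^*\|=1$, shows that the profile $a^*:=(\|y_1^*\|,\|y_2^*\|)$ lies in $S(B_{F^*},w,\delta)\cap S_{F^*}^+$. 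This is the step that pushes a weak$^*$ slice of $B_{X^*}$ down to a weak$^*$ slice of $B_{F^*}$, and it is exactly dual to the passage $a:=(\|y_1\|,\|y_2\|)\in S(B_F,w^*,\delta)$ used in Lemma~\ref{CorFDeny}.

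Next I would feed $a^*$ into item~(iii) of Lemma~\ref{CharLatticeDenyAst}: the star-denial hypothesis supplies a \emph{uniform} $\eps>0$ and (depending on $a^*$) a vector $a\in S_F^+$ such that every $g^*\in S(B_{F^*},a,\eps)\cap B_{F^*}^+$ satisfies $\|a^*+g^*\|<2-\eps$. Lifting $a$ back up, I choose $x=(x_1,x_2)\in S_X$ with $(\|x_1\|,\|x_2\|)=a$; this $x$ plays the role of the witness vector required in item~(iii) of Lemma~\ref{CharDenyAst}. For any $v^*=(v_1^*,v_2^*)\in S(B_{X^*},x,\eps)$ the same profile computation places $g^*:=(\|v_1^*\|,\|v_2^*\|)$ in $S(B_{F^*},a,\eps)\cap B_{F^*}^+$, whence $\|a^*+g^*\|<2-\eps$.

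Finally I would close the loop using the coordinatewise triangle inequality $\|y_i^*+v_i^*\|\le\|y_i^*\|+\|v_i^*\|$ and the monotonicity of the lattice norm on $F^*$: the closing estimate reads
\begin{eqnarray*}
\|y^*+v^*\|&=&\bigl\|(\|y_1^*+v_1^*\|,\|y_2^*+v_2^*\|)\bigr\|_{F^*} \\
&\leq&\bigl\|(\|y_1^*\|+\|v_1^*\|,\|y_2^*\|+\|v_2^*\|)\bigr\|_{F^*}=\|a^*+g^*\|<2-\eps.
\end{eqnarray*}
By item~(iii) of Lemma~\ref{CharDenyAst} this shows that $X$ star-denies the Daugavet property with $S(B_{X^*},z,\delta)\cap S_{X^*}$, and Lemma~\ref{NoDCtoDenyAst} then yields that $X_1\oplus_F X_2$ is not a Daugavet range. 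I expect no genuine obstacle beyond bookkeeping; the only point requiring care is that the primal/dual roles are now interchanged relative to Lemma~\ref{CorFDeny}, so one must consistently slice the dual ball $B_{X^*}$ by the primal vectors $z$ and $x$, and apply the monotonicity of $\|\cdot\|_{F^*}$ (rather than of $\|\cdot\|_F$) in the final inequality.
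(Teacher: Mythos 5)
Your argument is correct and is precisely the route the paper intends: the paper omits the proof, remarking only that it follows by dualizing Lemma~\ref{CorFDeny} via item (iii) of Lemma~\ref{CharLatticeDenyAst}, item (iii) of Lemma~\ref{CharDenyAst}, and Lemma~\ref{NoDCtoDenyAst}, which is exactly what you carry out. All the key points (lifting $w$ and $a$ to vectors with the prescribed norm profiles, pushing weak$^*$ slices of $B_{X^*}$ down to weak$^*$ slices of $B_{F^*}$, uniformity of $\eps$, and monotonicity of $\|\cdot\|_{F^*}$) are handled properly.
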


\section{Sums of spaces which are not Daugavet ranges}

In this section we find a large class of those $F$ which star-deny the positive Daugavet property with some $S(B_{F^*},w,\delta)\cap S_{F^*}^+$.
Throughout this and the following sections $e_1:=(1, 0)\in S_F^+$, $e_2:=(0, 1)\in S_F^+$, and the symbol $[a,b]$ is reserved for the line segment with the end points in $a, b \in F$.

\begin{lemma} 
Let $D$ be a closed subset of $S_{F^*}^+$. Suppose for every $f^* \in D$ there exists an $\eps > 0$ such that the property $P(f^*,\eps):=\{$there is an $a \in S_{F}^+$ such that every $b \in S(B_{F},f^*,\eps)\cap B_F^+$ satisfies $\|a+b\|<2-\eps \}$ holds true. Then $F$ star-denies the positive Daugavet property with $D$.
\label{QuantPermut2}
\end{lemma}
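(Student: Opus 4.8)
The plan is to read this as a quantifier-permutation statement and settle it by compactness. The hypothesis grants, for each $f^*\in D$, a tolerance $\eps$ that is allowed to depend on $f^*$, whereas the conclusion (most conveniently in the form of item (ii) of Lemma~\ref{CharLatticeDenyAst}) demands a single $\eps>0$ serving all of $D$ at once. Since $F^*$ is two-dimensional, $S_{F^*}^+$ is compact, and hence so is its closed subset $D$; this is exactly the ingredient that lets me turn ``$\forall f^*\,\exists\eps$'' into ``$\exists\eps\,\forall f^*$''. Accordingly, I would aim to produce a uniform $\eps>0$ such that for every $f^*\in D$ some $a\in S_F^+$ makes every $b\in S(B_F,f^*,\eps)\cap B_F^+$ satisfy $\|a+b\|<2-\eps$, and then invoke the equivalence (ii)$\Rightarrow$(i) of Lemma~\ref{CharLatticeDenyAst}.

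First I would record a monotonicity remark: if $P(f^*,\eps)$ holds, then $P(f^*,\eps')$ holds for every $0<\eps'\le\eps$, with the \emph{same} witness $a$. Indeed, passing from $\eps$ to $\eps'$ shrinks the slice, because $S(B_F,f^*,\eps')\subset S(B_F,f^*,\eps)$ (the condition $f^*(b)>1-\eps'$ forces $f^*(b)>1-\eps$), and simultaneously relaxes the target bound from $2-\eps$ to $2-\eps'$; both changes preserve the defining inequality. This makes the family $\{P(f^*,\cdot)\}$ downward closed in $\eps$, which is what will let a finite minimum serve as the common tolerance at the end.

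The heart of the argument is a local-uniformity step. Fix $f_0^*\in D$, let $\eps_0$ be the tolerance supplied by the hypothesis, and let $a_0\in S_F^+$ be the witness for $P(f_0^*,\eps_0)$. I claim the same $a_0$ witnesses $P(f^*,\eps_0/2)$ for every $f^*$ with $\|f^*-f_0^*\|_{F^*}<\eps_0/2$. The point is a slice-inclusion estimate: if $b\in S(B_F,f^*,\eps_0/2)\cap B_F^+$, then, using $\|b\|\le 1$,
\[
f_0^*(b)=f^*(b)+(f_0^*-f^*)(b)>\Bigl(1-\frac{\eps_0}{2}\Bigr)-\|f_0^*-f^*\|_{F^*}>1-\eps_0,
\]
so $b\in S(B_F,f_0^*,\eps_0)\cap B_F^+$ and hence $\|a_0+b\|<2-\eps_0<2-\eps_0/2$. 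Thus every point of $D$ has an open neighborhood on which $P(\,\cdot\,,\eps_0/2)$ holds with a fixed witness.

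Finally I would run the compactness step. These neighborhoods form an open cover of the compact set $D$; I extract a finite subcover centered at $g_1^*,\dots,g_n^*$ with tolerances $\eps_i$ and witnesses $a_i$, and set $\eps:=\min_{1\le i\le n}\eps_i/2>0$. Any $f^*\in D$ lies in the neighborhood of some $g_i^*$, so $P(f^*,\eps_i/2)$ holds with witness $a_i$; by the monotonicity remark $P(f^*,\eps)$ then holds with the same $a_i$. This is precisely item (ii) of Lemma~\ref{CharLatticeDenyAst} for $D$ with the single constant $\eps$, whence $F$ star-denies the positive Daugavet property with $D$. I expect the only delicate point to be the slice-inclusion estimate in the local step, namely splitting the $\eps_0$-budget correctly between shrinking the slice and perturbing $f^*$; the monotonicity and the passage to a finite subcover are routine.
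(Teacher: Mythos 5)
Your proof is correct and is essentially the same argument as the paper's: both exploit compactness of $D$ together with the same slice-inclusion estimate showing that a witness $a_0$ for $P(f_0^*,\eps_0)$ still works for all $f^*$ near $f_0^*$ at the halved tolerance. The only difference is packaging — you extract a finite subcover and take the minimum tolerance, while the paper defines $u(f^*)=\sup\{\eps>0\dopu P(f^*,\eps)\}$, proves it is lower semicontinuous via the identical perturbation estimate, and takes its attained minimum on $D$.
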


\begin{proof}
Note that if $P(f^*,\eps)$ holds true then $P(f^*,\eps_1)$ holds for every $\eps_1\dopu 0<\eps_1<\eps$. Our goal is to show that there exists a common $\eps_{min}>0$ such that $P(f^*,\eps_{min})$ holds true for every $f^*\in D$.

Consider the function $u(f^*)\dopu D\to(0,1)$, $u(f^*)=\sup\{\eps>0\,\dopu\;P(f^*,\eps)\;holds\;true\}$. Let us prove that $u(f^*)$ reaches its minimum value on $D$. Since $D$ is compact, it is sufficient to show that $u(f^*)$ is lower semicontinuous, i.e. that the set $u^{-1}\bigl((x,1)\bigr)$ is open for every $x \in [0,1)$. 

Let $f^*\in u^{-1}\bigl((x,1)\bigr)$. This means that $u(f^*)= \sup\{\eps>0\,\dopu\;P(f^*,\eps)\;holds\;true\}>x$. Hence there exist $\eps_0>x$ and $a\in S_{F}^+$ such that every $b\in S(B_{F},f^*,\eps_0)\cap B_F^+$ fulfills $\|a+b\|<2-\eps_0$.

Take an $\eps_1\dopu x<\eps_1<\eps_0$ and put $\delta:=\eps_0-\eps_1$. The set $D\cap B_{F^*}(f^*,\delta)$ is a relative neighborhood of $f^*$ in $D$. Let us show that $D\cap B_{F^*}(f^*,\delta)\subset u^{-1}\bigl((x,1)\bigr)$.

Let $f^*_1\in D\cap B_{F^*}(f^*,\delta)$. Then every $b\in S(B_{F},f_1^*,\eps_1)\cap B_F^+$ fulfills
$$f^*(b)\geq f_1^*(b)-\delta>1-\eps_1-\delta=1-\eps_0.$$ 
Thus $b\in S(B_{F},f^*,\eps_0)\cap B_F^+$, so we have
$$\|a+b\|<2-\eps_0<2-\eps_1.$$ 
This means that $u(f_1^*)\geq \eps_1>x$ and $f_1^*\in u^{-1}\bigl((x,1)\bigr)$. Consequently, $u^{-1}\bigl((x,1)\bigr)$ is open and $u(f^*)$ is lower semicontinuous. Then there exists an $f^*_0\in D$ such that $$u(f^*_0)=\min_{f^* \in D}u(f^*).$$
 
Take an $\eps_{min}\dopu 0<\eps_{min}<u(f^*_0)$ then $P(f^*,\eps_{min})$ holds true for every $f^*\in D$.
\end{proof} 

\begin{lemma} 
Let $a\in S_F^+$ and $f^*\in S_{F^*}^+$. Suppose for every $\eps>0$ there is a $b\in S(B_{F},f^*,\eps)\cap B_F^+$ with $\|a+b\| \geq 2-\eps$. Then there exists a $b_0\in S_F^+$ such that $f^*(b_0)=1$ and $[a,b_0]\subset S_F^+$. 
\label{Segment}
\end{lemma}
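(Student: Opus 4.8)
The plan is to combine a compactness argument with the convexity of the norm, turning the family of approximate conditions into exact equalities in the limit. First I would feed the hypothesis the sequence $\eps=1/n$: for each $n$ this produces a vector $b_n\in S(B_{F},f^*,1/n)\cap B_F^+$, so that $b_n\geq 0$, $\|b_n\|\leq 1$, $f^*(b_n)>1-1/n$, and $\|a+b_n\|\geq 2-1/n$. Since $F=\R^2$ is finite-dimensional, $B_F^+$ is compact, so after passing to a subsequence I may assume $b_n\to b_0$ for some $b_0\in B_F^+$.

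Next I would pass to the limit in the two inequalities. Continuity of $f^*$ and of the norm gives $f^*(b_0)\geq 1$ and $\|a+b_0\|\geq 2$. On the other hand $f^*(b_0)\leq\|f^*\|_{F^*}\|b_0\|\leq 1$, which forces $f^*(b_0)=1$; this in turn forces $\|b_0\|=1$, since $1=f^*(b_0)\leq\|b_0\|\leq 1$. As $b_0$ is a limit of nonnegative vectors, $b_0\geq 0$, so $b_0\in S_F^+$ with $f^*(b_0)=1$, as required. Likewise $\|a+b_0\|\leq\|a\|+\|b_0\|=2$ forces $\|a+b_0\|=2$, i.e. the midpoint $\frac{1}{2}(a+b_0)$ has norm one.

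It remains to upgrade this midpoint identity to flatness of the entire segment, which I expect to be the only genuinely nontrivial point. The function $t\mapsto\|ta+(1-t)b_0\|$ is convex on $[0,1]$ and bounded above by $1$, being dominated by $t\|a\|+(1-t)\|b_0\|=1$. Writing the midpoint as $\frac{a+b_0}{2}=\frac{1}{2}(ta+(1-t)b_0)+\frac{1}{2}((1-t)a+tb_0)$ and applying the triangle inequality gives $1=\|\frac{a+b_0}{2}\|\leq\frac{1}{2}\|ta+(1-t)b_0\|+\frac{1}{2}\|(1-t)a+tb_0\|\leq 1$, so both summands must equal $1$; in particular $\|ta+(1-t)b_0\|=1$ for every $t\in[0,1]$. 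Finally, each point $ta+(1-t)b_0$ is nonnegative because $a,b_0\geq 0$ and $t,1-t\geq 0$, whence $[a,b_0]\subset S_F^+$, completing the proof.
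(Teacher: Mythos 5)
Your proof is correct and takes essentially the same route as the paper's: extract a convergent subsequence from $B_F^+$ by compactness, pass to the limit to obtain $f^*(b_0)=1$ and $\|a+b_0\|=2$, and conclude that the segment lies on the sphere. The only difference is that the paper states the implication ``$\|a+b_0\|=2$ implies $[a,b_0]\subset S_F^+$'' without proof, whereas you supply the (correct) convexity argument for it.
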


\begin{proof}
Consider a vanishing sequence $\{\eps_n\}_{n=1}^{\infty}$, $\eps_n>0$. For every $n\in\N$ there exists a $b_n\in B_F^+$ with $f^*(b_n)>1-\eps_n$ and $\|a+b_n\|\geq 2-\eps_n$.

Since $B_F^+$ is a compact set, there exists a subsequence $\{b_{n_i}\}_{i=1}^{\infty}$ of $\{b_n\}_{n=1}^{\infty}$ that converges to some $b_0\in B_F^+$. Then $f^*(b_0)=1$ and $\|a+b_0\|=2$ which implies $[a,b_0]\subset S_F^+$. 
\end{proof}

Denote $\mathfrak{N}_3$ the class of those $F$ whose $S_F^+$ is a polygon which consists of at most three edges.

\begin{lemma} 
Let $F \notin \mathfrak{N}_3$. Then $F$ star-denies the positive Daugavet property with $S_{F^*}^+$.
\label{lemPropNotInN}
\end{lemma}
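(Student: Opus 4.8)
The plan is to establish a pointwise property and then compactify. Concretely, I would first show that for every $f^*\in S_{F^*}^+$ there is some $\eps>0$ for which the property $P(f^*,\eps)$ of Lemma~\ref{QuantPermut2} holds, and then apply Lemma~\ref{QuantPermut2} with the closed set $D=S_{F^*}^+$ to produce a single $\eps_{min}$ working for all $f^*$; this is exactly the assertion that $F$ star-denies the positive Daugavet property with $S_{F^*}^+$.

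For a fixed $f^*$ I would obtain $P(f^*,\eps)$ by contraposition through Lemma~\ref{Segment}. If, for a chosen $a\in S_F^+$, the property $P(f^*,\eps)$ failed for every $\eps>0$, then for each $\eps>0$ there would be a $b\in S(B_F,f^*,\eps)\cap B_F^+$ with $\|a+b\|\geq 2-\eps$; Lemma~\ref{Segment} would then furnish a $b_0$ in the contact set $C(f^*):=\{b\in S_F^+\dopu f^*(b)=1\}$ with $[a,b_0]\subset S_F^+$. Hence it suffices to produce, for each $f^*$, a single $a\in S_F^+$ such that $[a,b_0]\not\subset S_F^+$ for every $b_0\in C(f^*)$; for such an $a$ the property $P(f^*,\eps)$ must hold for some $\eps>0$.

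The geometric core is the description of the vectors $a$ that must be avoided. Since $F$ is a lattice with a modular norm, $S_F^+$ is a concave arc joining $e_1$ and $e_2$, and for $a,b_0\in S_F^+$ one has $[a,b_0]\subset S_F^+$ precisely when $a$ and $b_0$ lie on a common maximal segment (an ``edge'') of $S_F^+$. The contact set $C(f^*)$ is the face of $B_F$ exposed by $f^*$, restricted to the first quadrant, so it is either a single vertex of $S_F^+$ (possibly $e_1$ or $e_2$) or a single edge. Therefore the set of forbidden $a$, namely those for which some $b_0\in C(f^*)$ yields $[a,b_0]\subset S_F^+$, is exactly the union of the edges of $S_F^+$ meeting $C(f^*)$: the contact edge together with the at most two edges abutting its endpoints when $C(f^*)$ is an edge, or the at most two edges incident to $C(f^*)$ when it is a vertex. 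In every case the forbidden set is a union of at most three edges of $S_F^+$.

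Finally I would invoke the hypothesis. If these at most three edges covered all of $S_F^+$, then $S_F^+$ would be a polygonal arc built from at most three straight pieces, that is $F\in\mathfrak{N}_3$. Since $F\notin\mathfrak{N}_3$, the forbidden set cannot exhaust $S_F^+$, so there is a point $a\in S_F^+$ lying on none of the forbidden edges; this $a$ satisfies $[a,b_0]\not\subset S_F^+$ for all $b_0\in C(f^*)$ and thus yields $P(f^*,\eps)$, after which Lemma~\ref{QuantPermut2} closes the argument. I expect the step requiring the most care to be the verification that the forbidden set consists of at most three edges, together with the observation that its coinciding with $S_F^+$ would force $F\in\mathfrak{N}_3$; both hinge on the concavity of $S_F^+$ and on the fact that two distinct edges of a convex arc can meet only at a shared endpoint.
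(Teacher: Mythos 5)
Your proof is correct and follows essentially the same route as the paper: reduce to the pointwise property via Lemma~\ref{QuantPermut2} (which the paper leaves implicit), apply Lemma~\ref{Segment}, and conclude that otherwise $S_F^+$ would be a union of at most three segments. The only difference is one of economy: instead of characterizing the whole ``forbidden'' set, the paper simply tests $a_0=e_1$ and $a_0=e_2$, obtaining $b_0,b_1\in\Delta:=\{a\in S_F^+\dopu f^*(a)=1\}$ with $[e_1,b_0]\cup\Delta\cup[b_1,e_2]=S_F^+$, which already forces $F\in\mathfrak{N}_3$.
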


\begin{proof}
Assume to the contrary that there exists an $f^*\in S_{F^*}^+$ such that for every $\eps>0$ and $a_0\in S_{F}^+$ there is a $b\in S(B_{F},f^*,\eps)\cap B_F^+$ with $\|a_0+b\|\geq 2-\eps$.

Consider the set $\Delta:=\{a\in S_{F}^+: f^*(a)=1\}$. It is easy to see that $\Delta$ is a segment or a point. Put $a_0:=e_1$. By Lemma~\ref{Segment} there exists a $b_0 \in \Delta$ such that $[b_0,e_1]\subset S_{F}^+$. If we put $a_0:=e_2$ we obtain a $b_1 \in \Delta$ with $[b_1,e_2]\subset S_{F}^+$. Then $F \in \mathfrak{N}_3$, because $S_{F}^+$ consists of at most three segments: $[b_0,e_1]$, $\Delta$ and $[b_1,e_2]$. This contradiction completes the proof.
\end{proof}

\begin{lemma} 
Let $S_F^+$ be a polygon which consists of exactly three edges. Then there exists a $w^*=(w_1,w_2)\in S_{F^*}^+$ with $w_1<1$ and $w_2<1$ such that $F$ star-denies the positive Daugavet property with $S_{F^*}^+\setminus \stackrel{_\circ}{B}_{F^*}(w^*, \delta_0)$ for every $\delta_0>0$.
\label{noPropSomeF2}
\end{lemma}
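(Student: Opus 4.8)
The plan is to isolate a single ``bad'' functional $w^*$, namely the outward normal to the \emph{middle} edge of the arc $S_F^+$, and to show that every other $f^*\in S_{F^*}^+$ satisfies the property $P(f^*,\eps)$ of Lemma~\ref{QuantPermut2}; star-denial on any set omitting a neighbourhood of $w^*$ then follows from that lemma. I would first fix the combinatorial data: write $S_F^+$ as the polygonal arc with successive vertices $e_1=(1,0)$, $v_1$, $v_2$, $e_2=(0,1)$ and edges $\sigma_1:=[e_1,v_1]$, $\sigma_2:=[v_1,v_2]$, $\sigma_3:=[v_2,e_2]$, the four points being distinct since otherwise $S_F^+$ would have fewer than three edges. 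Let $w^*=(w_1,w_2)\in S_{F^*}^+$ be the outward normal to $\sigma_2$, normalised so that $\max_{B_F}w^*=1$; then its contact set $\Delta(w^*):=\{a\in S_F^+\dopu w^*(a)=1\}$ equals $\sigma_2$. Since $w^*\in S_{F^*}^+$ we have $w_1,w_2\ge 0$, and since $e_1,e_2\notin\sigma_2=\Delta(w^*)$ while $\|w^*\|_{F^*}=1$, we get $w_1=w^*(e_1)<1$ and $w_2=w^*(e_2)<1$, as demanded.

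Next comes the reduction. Fix $\delta_0>0$ and put $D:=S_{F^*}^+\setminus\stackrel{_\circ}{B}_{F^*}(w^*,\delta_0)$, a closed (hence compact) subset of $S_{F^*}^+$. By Lemma~\ref{QuantPermut2} it suffices to produce, for each $f^*\in D$, an $\eps>0$ with $P(f^*,\eps)$. As every $f^*\in D$ satisfies $f^*\neq w^*$, the whole statement will follow once I prove: \emph{for every $f^*\in S_{F^*}^+$ with $f^*\neq w^*$ there is an $\eps>0$ for which $P(f^*,\eps)$ holds.}

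For the core step I argue by contradiction, converting Lemma~\ref{Segment} into a rigidity statement. Suppose $P(f^*,\eps)$ fails for every $\eps>0$; unwinding the definition, this means that for each $a\in S_F^+$ and each $\eps>0$ there is a $b\in S(B_{F},f^*,\eps)\cap B_F^+$ with $\|a+b\|\ge 2-\eps$. Applying Lemma~\ref{Segment} to $a=e_1$ and to $a=e_2$ yields $b_0,b_1\in\Delta(f^*)$ with $[e_1,b_0]\subset S_F^+$ and $[e_2,b_1]\subset S_F^+$. The only edge of $S_F^+$ incident to $e_1$ is $\sigma_1$, and the only one incident to $e_2$ is $\sigma_3$, so $b_0\in\sigma_1$ and $b_1\in\sigma_3$. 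Now $\Delta(f^*)$, being the intersection of the convex arc $S_F^+$ with the supporting line $\{f^*=1\}$, is either a single vertex or a single whole edge. It contains $b_0\in\sigma_1$ and $b_1\in\sigma_3$; since $\sigma_1\cap\sigma_3=\varnothing$, it cannot be a single vertex, and it can be neither $\sigma_1$ nor $\sigma_3$. Hence $\Delta(f^*)=\sigma_2$, which forces $b_0=v_1$, $b_1=v_2$, and $f^*=w^*$, contradicting $f^*\neq w^*$. This finishes the proof.

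The step needing the most care—and the main obstacle—is the classification of $\Delta(f^*)$: I must justify that for $f^*\in S_{F^*}^+$ the contact set is exactly a vertex or a full edge of $S_F^+$ (a supporting line meeting a convex polygonal arc at an interior point of an edge must contain the entire edge), and then handle the borderline cases in which $b_0$ or $b_1$ is one of the shared vertices $v_1,v_2$. Everything else is straightforward bookkeeping of which of the three edges $\sigma_1,\sigma_2,\sigma_3$ can meet both $\sigma_1$ and $\sigma_3$ simultaneously.
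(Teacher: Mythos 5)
Your proposal is correct and follows essentially the same route as the paper: choose $w^*$ as the (unique) functional whose contact set is the middle edge, argue by contradiction via Lemma~\ref{Segment} applied at $a=e_1$ and $a=e_2$ to force $\Delta(f^*)$ to be the middle edge and hence $f^*=w^*$, and then pass from pointwise $P(f^*,\eps)$ to uniform star-denial on the closed set $D$ by Lemma~\ref{QuantPermut2}. The only cosmetic difference is that the paper sidesteps your ``vertex or full edge'' worry by noting directly that $\Delta$ is a segment lying in the polygonal arc and not contained in $\sigma_1$ or $\sigma_3$, hence contained in $\sigma_2$.
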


\begin{proof}
Since $S_F^+$ consists of three edges, it has four vertexes. The points $e_1$ and $e_2$ are two of them, denote $h_1$ and $h_2$ the remaining ones in such a way that $[e_1,h_1]\cup[e_2,h_2]\subset S_{F}^+$. There is the unique $w^*=(w_1,w_2)\in S_{F^*}^+$ such that $\{a \in S_F^+: w^*(a)=1\}=[h_1,h_2]$. It is obvious that $w_1<1$ and $w_2<1$.

Consider a $\delta_0>0$ and an $f^* \in S_{F^*}^+\setminus \stackrel{_\circ}{B}_{F^*}(w^*, \delta_0)$. Denote $\Delta:=\{a \in S_F^+: f^*(a)=1\}$, it is a segment or a point. Assume that for every $\eps >0$ and $a\in S_F^+$ there exists a $b\in S(B_{F},f^*,\eps)\cap B_F^+$ with $\|a+b\|\geq 2-\eps$. By Lemma~\ref{Segment} there are $b_1, b_2 \in \Delta$ such that $[b_1,e_1]\subset S_{F}^+$ and $[b_2,e_2]\subset S_{F}^+$. Hence $b_1\in[e_1,h_1]$ and $b_2\in[e_2,h_2]$. Since $[e_1,h_1]\cap[e_2,h_2]=\emptyset$ then $\Delta\nsubseteq[e_1,h_1]$, $\Delta\nsubseteq[e_2,h_2]$, and $\Delta$ is a segment. Consequently, $\Delta\subset[h_1,h_2]$. But then $w^*=f^*$, so we come to contradiction.

Thus for every $f^*\in S_{F^*}^+\setminus \stackrel{_\circ}{B}_{F^*}(w^*, \delta_0)$ there are $\eps>0$ and $a\in S_{F}^+$ such that every $b\in S(B_{F},f^*,\eps)\cap B_F^+$ satisfies $\|a+b\|<2-\eps$. Since $S_{F^*}^+\setminus \stackrel{_\circ}{B}_{F^*}(w^*, \delta_0)$ is closed, Lemma~\ref{QuantPermut2} implies the needed result.
\end{proof} 

Denote $\mathfrak{N}_2$ the class of those $F$ whose $S_F^+$ is a polygon which consists of at most two edges.

\begin{corollary} 
Let $F\notin\mathfrak{N}_2$. Then there is a $\delta>0$ such that $F$ star-denies the positive Daugavet property with $S(B_{F^*},e_1,\delta)\cap S_{F^*}^+$.
\label{notN2FDeny}
\end{corollary}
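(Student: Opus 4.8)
The plan is to split $S_{F^*}^+$ into the region near $e_1$ and the remaining region, handle each separately, and then combine using Lemma~\ref{QuantPermut2}. Recall that $\mathfrak{N}_2$ consists of those $F$ whose $S_F^+$ is a polygon of at most two edges. So if $F \notin \mathfrak{N}_2$, then either $F \notin \mathfrak{N}_3$ (the positive sphere has four or more edges, or is not a polygon at all) or $S_F^+$ is a polygon with exactly three edges. These two cases are precisely what Lemma~\ref{lemPropNotInN} and Lemma~\ref{noPropSomeF2} address.

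First I would treat the case $F \notin \mathfrak{N}_3$. Here Lemma~\ref{lemPropNotInN} directly gives that $F$ star-denies the positive Daugavet property with all of $S_{F^*}^+$. Since $S(B_{F^*},e_1,\delta)\cap S_{F^*}^+$ is a subset of $S_{F^*}^+$ for every $\delta>0$, the conclusion follows immediately for \emph{any} choice of $\delta$, as star-denying with a larger set trivially star-denies with any subset (the same $\eps$ works).

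Next I would treat the case where $S_F^+$ is a polygon with exactly three edges. Here I invoke Lemma~\ref{noPropSomeF2}, which produces a point $w^*=(w_1,w_2)\in S_{F^*}^+$ with $w_1<1$, $w_2<1$, such that $F$ star-denies the positive Daugavet property with $S_{F^*}^+ \setminus \stackrel{_\circ}{B}_{F^*}(w^*,\delta_0)$ for every $\delta_0>0$. The point to exploit is that $w^* \neq e_1$: indeed $e_1 = (1,0)$ has first coordinate $1$, whereas $w_1<1$, so $w^*$ is bounded away from $e_1$ on the compact positive sphere $S_{F^*}^+$. Concretely, I would set $r:=\|e_1 - w^*\|_{F^*}>0$ and then choose $\delta_0 := r/2$ (say), so that the open ball $\stackrel{_\circ}{B}_{F^*}(w^*,\delta_0)$ does not meet a suitable neighborhood of $e_1$. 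The slice $S(B_{F^*},e_1,\delta)\cap S_{F^*}^+$ shrinks toward $e_1$ as $\delta \to 0$, so for $\delta$ small enough this slice lies entirely inside $S_{F^*}^+ \setminus \stackrel{_\circ}{B}_{F^*}(w^*,\delta_0)$. With this $\delta$, the desired star-denial on the slice is inherited from the star-denial on the larger set supplied by Lemma~\ref{noPropSomeF2}.

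The main obstacle, and the only genuinely quantitative step, is the last one: verifying that the slice $S(B_{F^*},e_1,\delta)\cap S_{F^*}^+$ can be forced into the complement of the ball around $w^*$ by taking $\delta$ small. This requires a short estimate relating the slice defined by the functional $e_1$ (viewed as an element of $F^{**}=F$ acting on $F^*$) to a genuine norm-neighborhood of $e_1$ in $S_{F^*}^+$, using that $e_1$ is an extreme point or at least exposed in the relevant sense on the compact convex set $B_{F^*}^+$. Since $e_1 \in S_{F^*}^+$ and $w^*$ has $w_1<1$, any point $g^*=(g_1,g_2)$ in the slice satisfies $g_1 > 1-\delta$, which by the lattice norm structure (and compactness of $S_{F^*}^+$) forces $g^*$ to be within $O(\delta)$ of $e_1$ in norm; choosing $\delta$ so that this $O(\delta)$ is less than $\delta_0 = r/2$ completes the argument. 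Once both cases are disposed of, the corollary is proved, since in every situation with $F \notin \mathfrak{N}_2$ we have produced the required $\delta>0$.
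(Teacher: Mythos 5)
Your case split ($F\notin\mathfrak{N}_3$ handled by Lemma~\ref{lemPropNotInN}; the three-edge case handled by Lemma~\ref{noPropSomeF2}) and the idea of separating the slice $S(B_{F^*},e_1,\delta)\cap S_{F^*}^+$ from the ball $\stackrel{_\circ}{B}_{F^*}(w^*,\delta_0)$ are exactly the paper's. The first case is fine. But the step you yourself flag as ``the main obstacle'' is resolved incorrectly: it is \emph{not} true in general that $g^*=(g_1,g_2)\in S_{F^*}^+$ with $g_1>1-\delta$ must lie within $O(\delta)$ of $e_1$ in the $F^*$-norm. The set $\{g^*\in B_{F^*}^+\dopu g^*(e_1)=1\}$ is the face of $B_{F^*}$ exposed by $e_1\in F=F^{**}$, and this face is a nontrivial segment whenever $e_1$ is a corner of $B_F$ (i.e.\ $\hat{x}_1=0$), which does occur for three-edge $F$. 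Concretely, take $S_F^+$ to be the polygon with vertices $(1,0),(0.9,0.6),(0.6,0.9),(0,1)$; then $\|(1,t)\|_{F^*}=1$ for all $t\in[0,1/6]$, so every slice $S(B_{F^*},e_1,\delta)$ contains the point $(1,1/6)$, which stays at distance $1/6$ from $e_1$ no matter how small $\delta$ is. So $e_1$ is not exposed in $B_{F^*}$ in the sense your argument needs, and the claimed containment of the slice in a small norm-ball around $e_1$ fails.

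The gap is easily repaired, and the paper's repair is simpler than your detour through a neighborhood of $e_1$: compare first coordinates directly. By the lattice structure of the $F^*$-norm, $|f_1-w_1|\leq\|f^*-w^*\|_{F^*}$, so every $f^*=(f_1,f_2)\in\stackrel{_\circ}{B}_{F^*}(w^*,\delta_0)$ satisfies $f^*(e_1)=f_1<w_1+\delta_0$, while every element of the slice satisfies $f^*(e_1)>1-\delta$. Since $w_1<1$, one can choose $\delta_0>0$ and then $\delta>0$ with $w_1+\delta_0<1-\delta$, which makes the ball and the slice disjoint; the star-denial on $S_{F^*}^+\setminus\stackrel{_\circ}{B}_{F^*}(w^*,\delta_0)$ then restricts to the slice. (Also, no further appeal to Lemma~\ref{QuantPermut2} is needed at this stage -- the uniform $\eps$ is already built into the conclusions of Lemmas~\ref{lemPropNotInN} and~\ref{noPropSomeF2}.)
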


\begin{proof}
If $F\notin \mathfrak{N}_3$ then by Lemma~\ref{lemPropNotInN} the statement is proved.

If $S_F^+$ is a polygon which consists of exactly three edges then by Lemma~\ref{noPropSomeF2} there exists a $w^*=(w_1,w_2)\in S_{F^*}^+$ with $w_1<1$ such that $F$ star-denies the positive Daugavet property with $S_{F^*}^+\setminus \stackrel{_\circ}{B}_{F^*}(w^*, \delta_0)$ for every $\delta_0>0$. Pick a $\delta_0>0$ with $\delta_0+w_1<1$ and a $\delta$ such that $0<\delta<1-w_1-\delta_0$. Then every $f^*\in \stackrel{_\circ}{B}_{F^*}(w^*,\delta_0)$ satisfies
$$f^*(e_1)<w^*(e_1)+\delta_0=w_1+\delta_0<1-\delta.$$
Hence $\stackrel{_\circ}{B}_{F^*}(w^*,\delta_0)\cap S(B_{F^*},e_1,\delta)=\emptyset$. Thus $F$ star-denies the positive Daugavet property with $S(B_{F^*},e_1,\delta)\cap S_{F^*}^+$.
\end{proof}

We obtain the following fact by the successive application of Corollary~\ref{notN2FDeny} and Lemma~\ref{CorFDenyAst}.

\begin{corollary} 
Let $F\notin \mathfrak{N}_2$. Then $X_1\oplus_F X_2$ is not a Daugavet range for any $X_1$ and $X_2$.
\label{noDCtoSomeF}
\end{corollary}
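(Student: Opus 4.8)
The plan is to observe that Corollary~\ref{noDCtoSomeF} follows by a direct chaining of two results already established in the excerpt, so the task reduces to verifying that the hypotheses match up correctly. Starting from the assumption $F\notin\mathfrak{N}_2$, the first step is to invoke Corollary~\ref{notN2FDeny}, which produces a $\delta>0$ such that $F$ star-denies the positive Daugavet property with the set $S(B_{F^*},e_1,\delta)\cap S_{F^*}^+$. Here I would note that $e_1\in S_F^+$, so the point $w:=e_1$ is a legitimate choice of a vector in $S_F^+$ to feed into the next lemma.

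The second step is to apply Lemma~\ref{CorFDenyAst}. That lemma asserts that if there exist $w\in S_F^+$ and $\delta>0$ for which $F$ star-denies the positive Daugavet property with $S(B_{F^*},w,\delta)\cap S_{F^*}^+$, then $X_1\oplus_F X_2$ is not a Daugavet range for any $X_1$ and $X_2$. Taking $w=e_1$ and the $\delta$ furnished by Corollary~\ref{notN2FDeny}, the hypothesis of Lemma~\ref{CorFDenyAst} is met verbatim, and the conclusion is exactly the assertion of Corollary~\ref{noDCtoSomeF}. Thus the two statements compose without any gap.

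I do not expect any genuine obstacle here, since this corollary is explicitly flagged in the excerpt as being obtained ``by the successive application of Corollary~\ref{notN2FDeny} and Lemma~\ref{CorFDenyAst}''; the only point requiring a moment's care is the purely notational matching of the roles of $w$ and $\delta$ between the two results, together with the trivial observation that $e_1$ indeed lies in $S_F^+$. The proof is therefore a one-line composition, which I would write as follows:

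\begin{proof}
By Corollary~\ref{notN2FDeny} there is a $\delta>0$ such that $F$ star-denies the positive Daugavet property with $S(B_{F^*},e_1,\delta)\cap S_{F^*}^+$. Since $e_1\in S_F^+$, Lemma~\ref{CorFDenyAst} applied with $w=e_1$ yields that $X_1\oplus_F X_2$ is not a Daugavet range for any $X_1$ and $X_2$.
\end{proof}
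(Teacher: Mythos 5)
Your proposal is correct and matches the paper exactly: the paper obtains this corollary precisely ``by the successive application of Corollary~\ref{notN2FDeny} and Lemma~\ref{CorFDenyAst}'', with $w=e_1$ and the $\delta$ supplied by Corollary~\ref{notN2FDeny}, which is exactly your composition.
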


\section{Sums of spaces which are not Daugavet domains}

\begin{lemma} 
Let $F^*\notin \mathfrak{N}_2$. Then $X_1 \oplus_F X_2$ is not a Daugavet domain for any $X_1$ and $X_2$.
\label{noDCfromSomeF}
\end{lemma}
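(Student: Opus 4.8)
The plan is to reduce the statement about the domain $X_1\oplus_F X_2$ to the already-established machinery for ranges by exploiting duality. The key observation is that Lemma~\ref{CorFDeny} gives a sufficient condition for $X_1\oplus_F X_2$ to fail to be a Daugavet domain, phrased in terms of $F$ \emph{denying} (not star-denying) the positive Daugavet property with some slice $S(B_F,w^*,\delta)\cap S_F^+$. Meanwhile, the hypothesis $F^*\notin\mathfrak N_2$ is exactly the input that Corollary~\ref{notN2FDeny} is built to handle, but that corollary speaks about star-denying. So the bridge between the two is Lemma~\ref{Switch}, which converts star-denying on $F^*$ into denying on $F^{**}=F$.

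First I would apply Corollary~\ref{notN2FDeny} with the roles of $F$ and $F^*$ interchanged: since $F^*\notin\mathfrak N_2$ and $(F^*)^*=F^{**}=F$, the corollary yields a $\delta>0$ such that $F^*$ star-denies the positive Daugavet property with some slice $S(B_{F^{**}},e_1,\delta)\cap S_{F^{**}}^+ = S(B_F,e_1,\delta)\cap S_F^+$. Here $e_1=(1,0)$ is understood as an element of $S_F^+=S_{F^{**}}^+$. Next I would invoke part (b) of Lemma~\ref{Switch}: if $F^*$ star-denies the positive Daugavet property with a set $\tilde A\subset S_{F^{**}}^+=S_F^+$, then $F^{**}=F$ denies the positive Daugavet property with $\tilde A$. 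Applying this with $\tilde A=S(B_F,e_1,\delta)\cap S_F^+$ shows that $F$ denies the positive Daugavet property with $S(B_F,e_1,\delta)\cap S_F^+$.

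At this point the hypothesis of Lemma~\ref{CorFDeny} is met: taking $w^*:=e_1\in S_{F^*}^+$ and the $\delta$ just produced, $F$ denies the positive Daugavet property with $S(B_F,w^*,\delta)\cap S_F^+$. (One should note that $e_1\in S_{F^*}^+$ because $\|(1,0)\|_{F^*}=1$, as recorded just before Lemma~\ref{Switch}, so $w^*=e_1$ is a legitimate choice.) Lemma~\ref{CorFDeny} then gives directly that $X_1\oplus_F X_2$ is not a Daugavet domain for any $X_1$ and $X_2$, which is the assertion.

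The only subtlety I anticipate is bookkeeping around the three-way identification $F^{**}=F$ and making sure the element $e_1$ is read consistently as living in $S_F^+$ on one side and in $S_{F^*}^+$ (as the functional $w^*$) on the other. Corollary~\ref{notN2FDeny} was proved for a generic two-dimensional lattice and its dual, and since $F^*$ is itself a two-dimensional lattice (as the excerpt verifies before Lemma~\ref{Switch}), the corollary applies verbatim to $F^*$ in place of $F$; no new estimate is required. Thus there is no genuine analytic obstacle here — the proof is a short duality argument chaining Corollary~\ref{notN2FDeny}, Lemma~\ref{Switch}(b), and Lemma~\ref{CorFDeny}.
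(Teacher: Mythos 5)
Your proposal is correct and follows exactly the paper's own argument: apply Corollary~\ref{notN2FDeny} to $F^*$, convert star-denying on $F^*$ into denying on $F^{**}=F$ via Lemma~\ref{Switch}, and conclude with Lemma~\ref{CorFDeny}. The extra care you take in identifying $e_1$ as the functional $w^*\in S_{F^*}^+$ is sound and matches what the paper leaves implicit.
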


\begin{proof}
By Corollary~\ref{notN2FDeny} there is a $\delta>0$ such that $F^*$ star-denies the positive Daugavet property with $S(B_{F^{**}},e_1,\delta)\cap S_{F^{**}}^+$. Recall that $F^{**}=F$. Therefore it follows from Lemma~\ref{Switch} that $F$ denies the positive Daugavet property with $S(B_{F},e_1,\delta)\cap S_{F}^+$. Then Lemma~\ref{CorFDeny} gives the needed result. 
\end{proof}

We characterize the class of those $F$ such that $S_{F^*}^+$ is a polygon with at most two edges, with the help of the following notation.
Consider an $F$ whose $S_F^+$ is a polygon with $n$ edges. Denote $\hat{x}_1:=\max_{(1, y) \in S_F^+} y$ and $\hat{x}_2:=\max _{(x, 1) \in S_F^+} x$. We say that $F$ belongs to $\mathcal{F}_{n-1,n}$ if $\hat{x}_1 > 0$ and $\hat{x}_2 > 0$. If only one of $\hat{x}_j$ equals zero, we say that $F\in\mathcal{F}_{n,n}$. And if both $\hat{x}_1=\hat{x}_2=0$ then $F\in\mathcal{F}_{n+1,n}$ (see Figure~\ref{Fig1}).
\begin{figure}[h]
\includegraphics[width=0.22\textwidth]{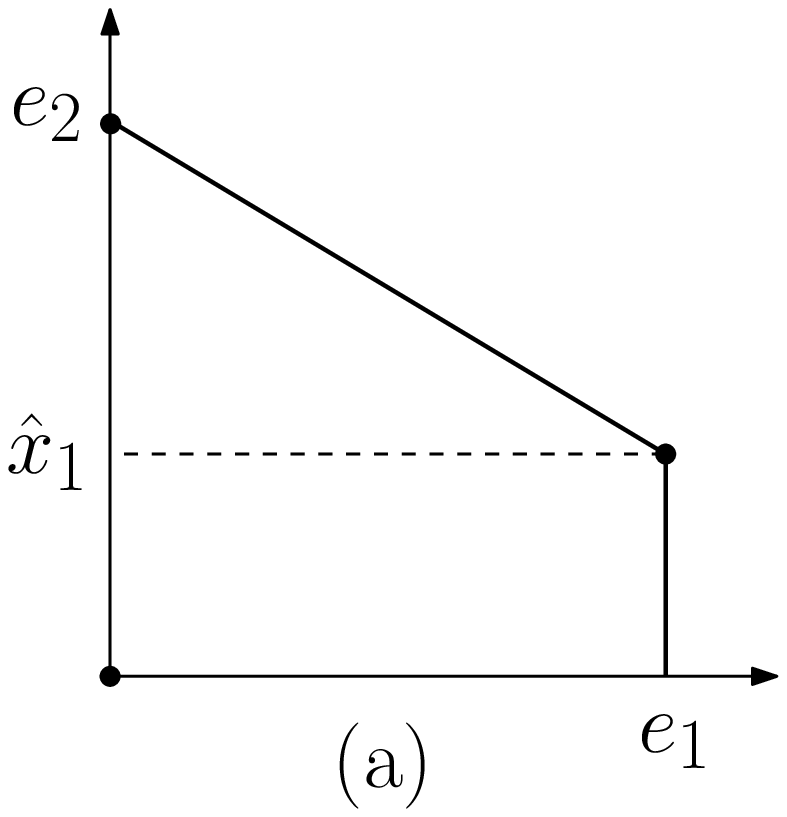}
\includegraphics[width=0.22\textwidth]{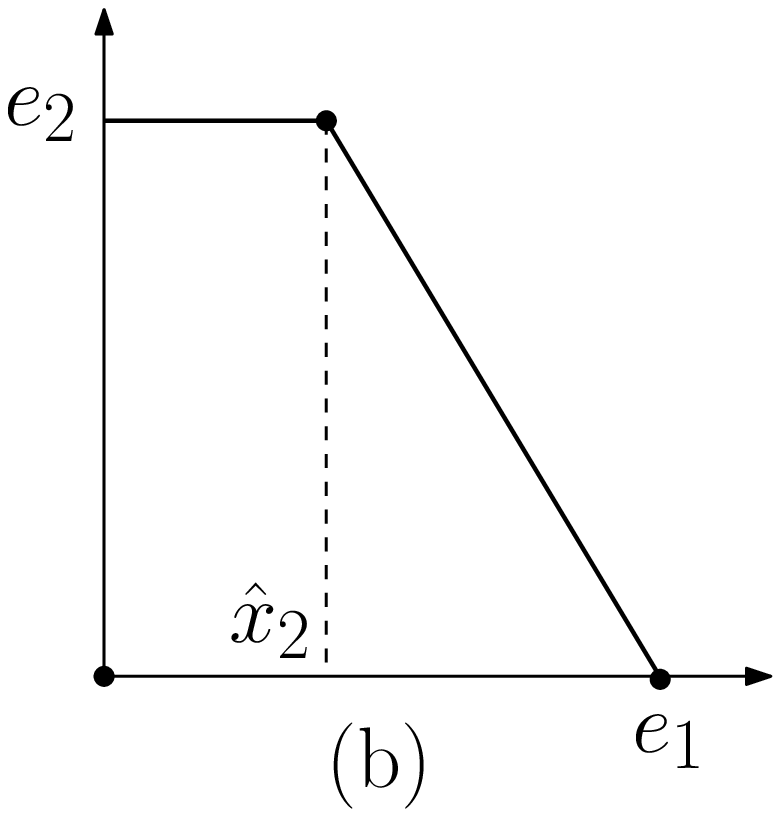}
\includegraphics[width=0.22\textwidth]{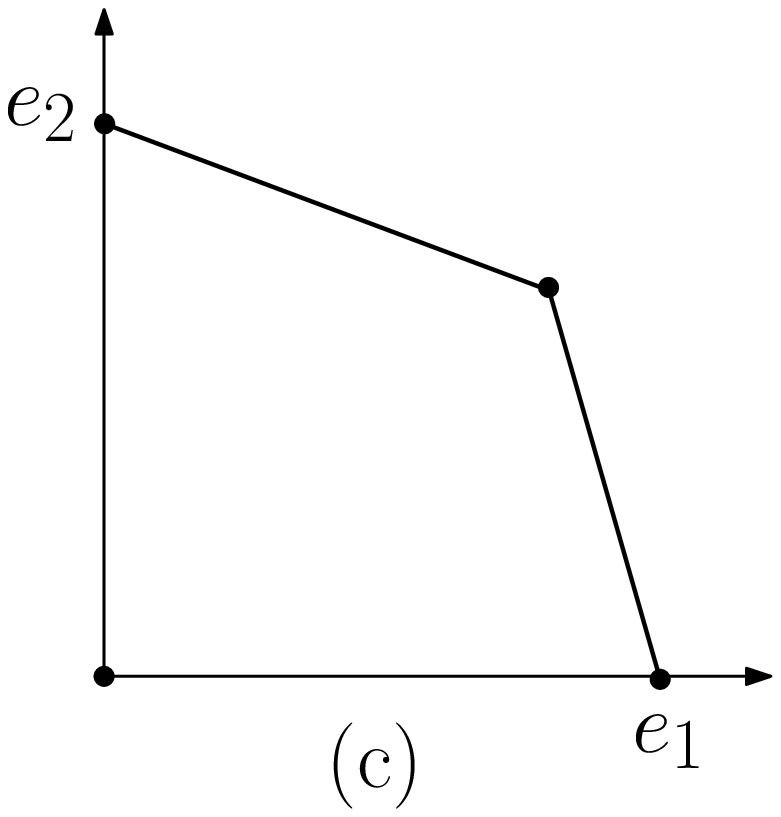}
\includegraphics[width=0.22\textwidth]{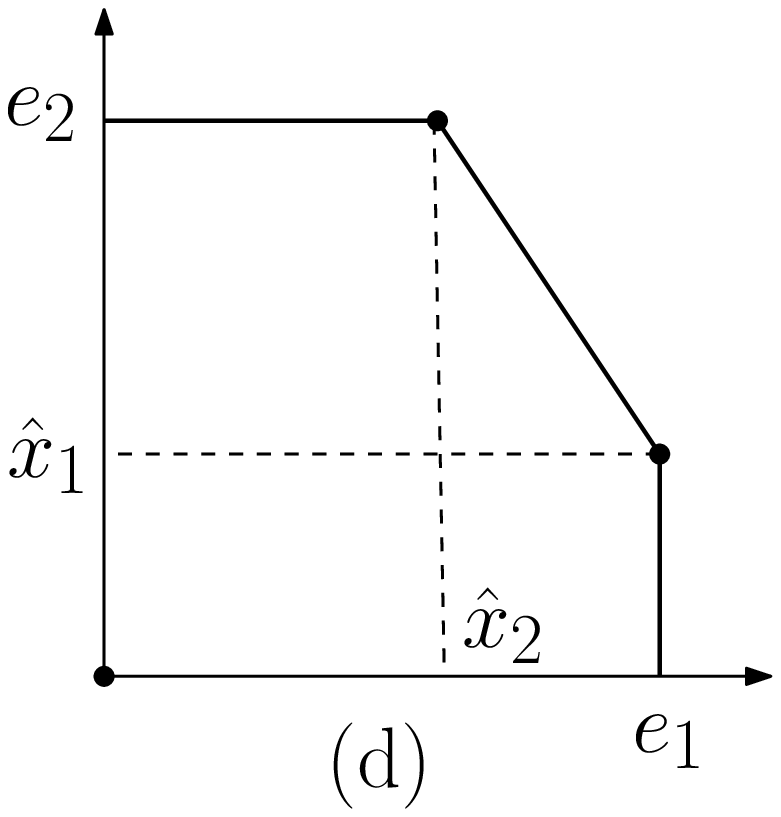}
\caption{Those $F$ whose $S_F^+$ are presented on pictures (a) and (b), belong to $\mathcal{F}_{2,2}$. Picture (c) shows $S_F^+$ of $F\in\mathcal{F}_{3,2}$ and (d) shows $S_F^+$ of $F\in\mathcal{F}_{2,3}$.}
\label{Fig1}
\end{figure}

Thus, $\mathfrak{N}_2=\{\ell_{1}^{\scriptscriptstyle (2)}\}\cup\{\ell_{\infty}^{\scriptscriptstyle (2)}\}\cup\mathcal{F}_{2,2}\cup\mathcal{F}_{3,2}$. Let $n\in\N$ and $m\in\{n-1,n,n+1\}$. It is easy to see that $F^*\in \mathcal{F}_{n,m}$ if and only if $F\in \mathcal{F}_{m,n}$. Therefore, if $F^*\in \mathcal{F}_{2,2}\cup\mathcal{F}_{3,2}$ then $F\in \mathcal{F}_{2,2} \cup \mathcal{F}_{2,3}$. So, we obtain the following fact:

\begin{corollary} 
Let $F\notin \{\ell_1^{\scriptscriptstyle (2)}\}\cup\{\ell_{\infty}^{\scriptscriptstyle (2)}\}\cup\mathcal{F}_{2,2} \cup \mathcal{F}_{2,3}=:\mathfrak{M}_2$. Then $X_1 \oplus_F X_2$ is not a Daugavet domain for any $X_1$ and $X_2$.
\label{CorNoDCfromF}
\end{corollary}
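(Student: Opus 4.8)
The plan is to obtain this corollary as a direct consequence of Lemma~\ref{noDCfromSomeF} together with the duality bookkeeping already assembled in the paragraph preceding the statement. Lemma~\ref{noDCfromSomeF} tells us that $X_1 \oplus_F X_2$ fails to be a Daugavet domain whenever $F^* \notin \mathfrak{N}_2$. Thus the entire content of the corollary reduces to the purely combinatorial implication
\[
F \notin \mathfrak{M}_2 \quad\Longrightarrow\quad F^* \notin \mathfrak{N}_2,
\]
and it is cleanest to verify its contrapositive, namely that $F^* \in \mathfrak{N}_2$ forces $F \in \mathfrak{M}_2$.

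To carry this out I would first invoke the decomposition $\mathfrak{N}_2 = \{\ell_1^{\scriptscriptstyle (2)}\} \cup \{\ell_\infty^{\scriptscriptstyle (2)}\} \cup \mathcal{F}_{2,2} \cup \mathcal{F}_{3,2}$ recorded above, and then translate each of its four pieces under the map $F^* \mapsto (F^*)^* = F$, using $F^{**} = F$. For the two isolated cases one applies the classical self-duality $(\ell_1^{\scriptscriptstyle (2)})^* = \ell_\infty^{\scriptscriptstyle (2)}$: if $F^* = \ell_1^{\scriptscriptstyle (2)}$ then $F = \ell_\infty^{\scriptscriptstyle (2)}$, and symmetrically if $F^* = \ell_\infty^{\scriptscriptstyle (2)}$ then $F = \ell_1^{\scriptscriptstyle (2)}$. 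For the two parametrized families one applies the equivalence $F^* \in \mathcal{F}_{n,m} \Leftrightarrow F \in \mathcal{F}_{m,n}$ from the preceding paragraph: $F^* \in \mathcal{F}_{2,2}$ gives $F \in \mathcal{F}_{2,2}$, while $F^* \in \mathcal{F}_{3,2}$ gives $F \in \mathcal{F}_{2,3}$. Collecting the four outcomes shows $F \in \{\ell_1^{\scriptscriptstyle (2)}\} \cup \{\ell_\infty^{\scriptscriptstyle (2)}\} \cup \mathcal{F}_{2,2} \cup \mathcal{F}_{2,3} = \mathfrak{M}_2$, which is exactly the contrapositive we need; feeding $F^* \notin \mathfrak{N}_2$ back into Lemma~\ref{noDCfromSomeF} then closes the argument.

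There is no genuine analytic obstacle here: all the hard work --- the quantitative denial of the positive Daugavet property, the passage from $F$ to the sum $X_1 \oplus_F X_2$, and the classification of $\mathfrak{N}_2$ by edge count --- has already been discharged in Corollary~\ref{notN2FDeny}, Lemma~\ref{CorFDeny}, and Lemma~\ref{noDCfromSomeF}. The only point demanding a little care is to confirm that the duality $F^* \mapsto F$ carries the four pieces of $\mathfrak{N}_2$ bijectively onto the four pieces of $\mathfrak{M}_2$, with no case dropped or double-counted; in particular one should check that the indices in $F^* \in \mathcal{F}_{n,m} \Leftrightarrow F \in \mathcal{F}_{m,n}$ are applied in the correct order, so that $\mathcal{F}_{3,2}$ maps to $\mathcal{F}_{2,3}$ and not the reverse. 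Once that matching is verified the corollary follows immediately.
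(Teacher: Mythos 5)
Your proposal is correct and follows essentially the same route as the paper: the text preceding the corollary establishes the decomposition of $\mathfrak{N}_2$, the duality $F^*\in\mathcal{F}_{n,m}\Leftrightarrow F\in\mathcal{F}_{m,n}$, and the resulting implication $F^*\in\mathfrak{N}_2\Rightarrow F\in\mathfrak{M}_2$, after which the corollary is read off from Lemma~\ref{noDCfromSomeF}. Your contrapositive bookkeeping, including the matching of $\mathcal{F}_{3,2}$ with $\mathcal{F}_{2,3}$ and the self-duality of $\ell_1^{\scriptscriptstyle (2)}$ and $\ell_\infty^{\scriptscriptstyle (2)}$, is exactly the intended argument.
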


\section{Examples of Daugavet centers acting from and into a sum of two Banach spaces}

In this section we show that for every $F\in\mathfrak{M}_2$ there exists a Daugavet domain $X_1\oplus_FX_2$, and for every $F\in\mathfrak{N}_2$ there is a Daugavet range $X_1\oplus_FX_2$.

For $F=\ell_{1}^{\scriptscriptstyle (2)}$ and $F=\ell_{\infty}^{\scriptscriptstyle (2)}$ several examples of $X_1\oplus_FX_2$ which are Daugavet domains and Daugavet ranges, are known. For instance, if $X$ is a Daugavet domain then  for every $E$ the sum $X\oplus_{\infty}E$ is as well; and if $X$ is a Daugavet range then $X\oplus_{1}E$ is. If $G_1 \dopu X_1 \to Y_1$ and $G_2 \dopu X_2 \to Y_2$ are Daugavet centers then $G\dopu X_1 \oplus_1 X_2 \to Y_1 \oplus_1 Y_2$ and $\tilde{G}\dopu X_1 \oplus_{\infty} X_2 \to Y_1 \oplus_{\infty} Y_2$ which map every $(x_1, x_2)$ into $(G_1x_1, G_2x_2)$, are Daugavet centers as well~\cite{BosKad}.

For future reference we mention the following fact:

\begin{lemma}[\cite{KadSSW}, Lemma 2.8] 
If $X$ has the Daugavet property then for every finite-dimensional subspace
$Y_{0}$ of $X$, every $\eps>0$, and every slice $S(B_{X},x^*,\eps)$ there is an $x\in S(B_{X},x^*,\eps)$ such that every $y\in Y_{0}$ and $t\in\R$ fulfill
$$\|y+tx\|\ge(1-\eps)(\|y\|+|t|).$$
\label{DPlin}
\end{lemma}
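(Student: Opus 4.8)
The plan is to reduce the statement, by homogeneity and compactness, to a single finitary assertion about the Daugavet property, and then to establish that assertion by iterating the slice characterization of Theorem~\ref{charDC} applied to $\mathrm{Id}\dopu X\to X$ (recall $X$ has the Daugavet property exactly when $\mathrm{Id}$ is a \DC). First I would dispose of the trivial cases $y=0$ and $t=0$ and then divide by $\|y\|$, so that it suffices to find $x\in S(B_X,x^*,\eps)$ with $\|y+sx\|\ge(1-\eps)(1+|s|)$ for every $y\in S_{Y_0}$ and every $s\in\R$. Since $Y_0$ is finite dimensional, $S_{Y_0}$ is compact, so I fix a \emph{symmetric} finite $\delta$-net $z_1,\dots,z_m$ of $S_{Y_0}$, with $\delta:=\eps/3$, and set $\mu:=\eps/3$. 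The crux will be to produce one point $x$ in the given slice with
\[
\|z_i+x\|>2-\mu \qquad (i=1,\dots,m).
\]

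Granting this, the passage to the conclusion is a norming-functional argument. Fix $z_i$ and choose $f_i\in S_{X^*}$ with $f_i(z_i+x)=\|z_i+x\|>2-\mu$; since $\|z_i\|=1$ and $\|x\|\le 1$ this forces $f_i(z_i)>1-\mu$ and $f_i(x)>1-\mu$, whence $\|z_i+sx\|\ge f_i(z_i)+sf_i(x)>(1-\mu)(1+s)$ for all $s\ge 0$. For $s\le 0$ I use the net point $-z_i$ (available by symmetry) and its norming functional $g_i$ for $(-z_i)+x$, which gives $\|z_i+sx\|=\||s|x-z_i\|\ge |s|g_i(x)+g_i(-z_i)>(1-\mu)(1+|s|)$. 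Thus $\|z_i+sx\|>(1-\mu)(1+|s|)$ for every $s$. For arbitrary $y\in S_{Y_0}$ I pick $z_i$ with $\|y-z_i\|\le\delta$ and estimate $\|y+sx\|\ge\|z_i+sx\|-\delta>(1-\mu)(1+|s|)-\delta$; with $\mu=\delta=\eps/3$ the quantity $(\eps-\mu)(1+|s|)-\delta\ge\eps/3>0$, so $\|y+sx\|\ge(1-\eps)(1+|s|)$, which is the reduced claim.

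It remains to produce $x$. I would build a nested chain of slices $S^{(0)}\supseteq S^{(1)}\supseteq\cdots\supseteq S^{(m)}$, with $S^{(0)}:=S(B_X,x^*,\eps)$, such that on $S^{(k)}$ the first $k$ of the inequalities above already hold everywhere; any point of $S^{(m)}$ then works. For the inductive step, write $S^{(k-1)}=S(B_X,u_{k-1}^*,\alpha_{k-1})$ with $u_{k-1}^*\in S_{X^*}$. By item~(ii) of Theorem~\ref{charDC} applied to $\mathrm{Id}$ with $y_0=z_k$, $x_0^*=u_{k-1}^*$ and a small parameter $\rho$, there is an $x_k\in S(B_X,u_{k-1}^*,\alpha_{k-1}/2)$ with $\|x_k+z_k\|>2-\rho$; let $v^*\in S_{X^*}$ norm $x_k+z_k$, so $v^*(z_k)>1-\rho$ and $v^*(x_k)>1-\rho$. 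I then let $u_k^*$ be the normalization of the \emph{weighted} combination $(1-\lambda)u_{k-1}^*+\lambda v^*$ and take a thin slice $S^{(k)}:=S(B_X,u_k^*,\alpha_k)$. A direct estimate shows that every $x\in S^{(k)}$ satisfies both $(1-\lambda)u_{k-1}^*(x)+\lambda v^*(x)>(1-\sigma)(1-\alpha_k)$ with $\sigma\to(1-\lambda)\alpha_{k-1}/2$ as $\rho,\alpha_k\to0$, and hence, using $u_{k-1}^*(x)\le1$ and $v^*(x)\le1$ separately, that $u_{k-1}^*(x)>1-\alpha_{k-1}$ (so $S^{(k)}\subseteq S^{(k-1)}$, preserving the earlier inequalities) and $v^*(x)>1-(\mu-\rho)$ (so $\|z_k+x\|\ge v^*(z_k)+v^*(x)>2-\mu$).

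The main obstacle is precisely the bookkeeping of constants in this last step. The two demands pull in opposite directions: keeping $x$ inside the old slice needs the averaging not to spoil $u_{k-1}^*(x)$, while securing the new inequality needs $v^*(x)$ pushed very close to $1$. The resolution is to choose $\lambda$ close to $1$ \emph{first}: this makes the controllable loss on $u_{k-1}^*$ behave like $\alpha_{k-1}/2$ (independently of $\lambda$, hence safely below $\alpha_{k-1}$), while driving $v^*(x)$ to within $(1-\lambda)\alpha_{k-1}/(2\lambda)\to0$ of $1$, which can be made smaller than the prescribed target $\mu-\rho$; only afterward does one fix $\rho$ and $\alpha_k$ small enough to absorb the remaining slack. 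Carrying out this triple-nested choice of $\lambda$, then $\rho,\alpha_k$, at each of the finitely many steps is routine but is where all the care is needed.
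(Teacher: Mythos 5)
Your proof is correct. The paper gives no proof of this lemma (it is quoted verbatim from \cite{KadSSW}), and your argument --- reducing by homogeneity and compactness to finitely many inequalities $\|z_i+x\|>2-\mu$ over a symmetric net of $S_{Y_0}$, securing these by an iterated slice refinement based on Theorem~\ref{charDC}(ii) for $\mathrm{Id}$ with the weighted functional $(1-\lambda)u_{k-1}^*+\lambda v^*$ and $\lambda$ chosen close to $1$ before $\rho,\alpha_k$, and then passing to norming functionals --- is essentially the original proof of Lemmas~2.2 and~2.8 in that reference, with the constants handled correctly.
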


Consider an $F \in \mathcal{F}_{2,2} \cup \mathcal{F}_{2,3}$. Denote $c_1:=(1, \hat{x}_1) \in S_F^+$ and $c_2:=(\hat{x}_2, 1) \in S_F^+$. Then $[c_1,c_2]\subset S_F^+$ (see Figure~\ref{Fig2}). 
\begin{figure}[h]
\includegraphics[width=0.22\textwidth]{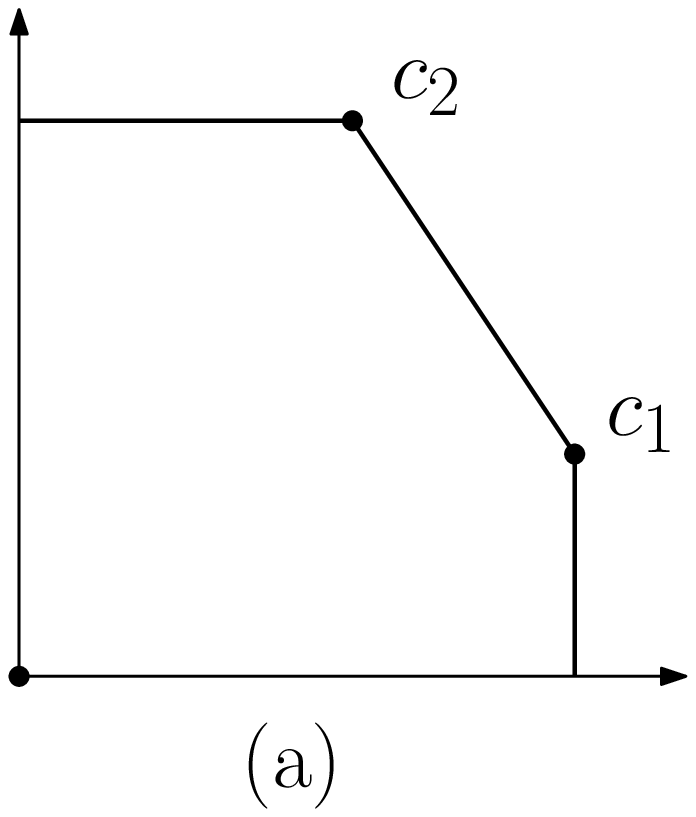}
\includegraphics[width=0.22\textwidth]{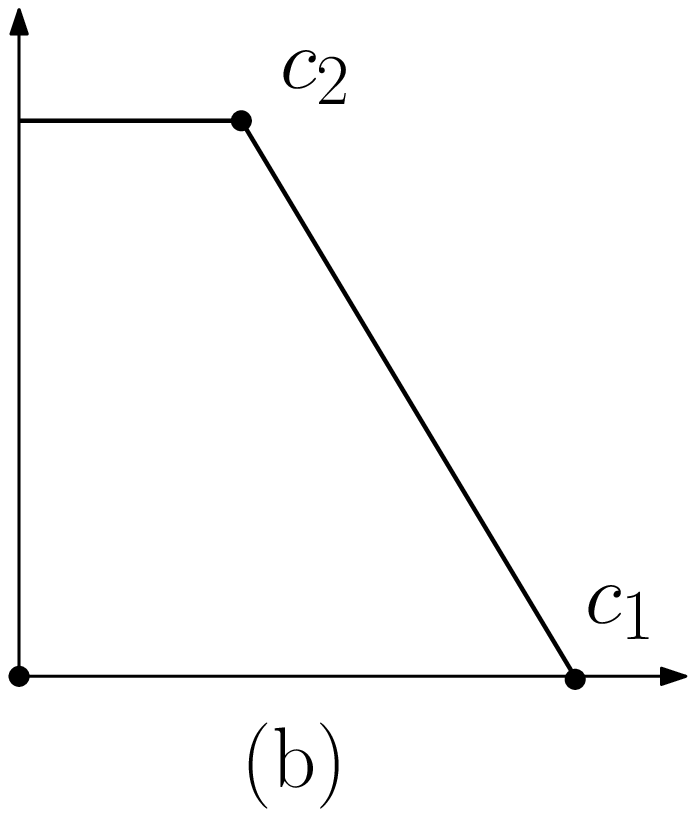}
\caption{Picture (a) shows $S_F^+$ of $F\in\mathcal{F}_{2,3}$ and (b) presents $S_F^+$ of $F\in\mathcal{F}_{2,2}$.}
\label{Fig2}
\end{figure}

Let the line containing $[c_1,c_2]$ have the equation $f_1a_1+f_2a_2=1$ with $(f_1, f_2)\in S_{F^*}^+$. Remark that for every $w^*\in S_{F^*}^+$ and $\eps>0$ we have $S(B_{F},w^*,\eps)\cap[c_1,c_2]\neq\emptyset$. In other words, there exists an $(a_1, a_2)\in S(B_{F},w^*,\eps)$ such that $f_1a_1+f_2a_2=1$.

\begin{proposition} 
Let $X$ have the Daugavet property, $F\in\mathcal{F}_{2,2}\cup\mathcal{F}_{2,3}$, and let $(f_1, f_2) \in S_{F^*}^+$ be the functional described above. Then $G \dopu X \oplus_F X \to X$, $G(x_1, x_2)=f_1x_1+f_2x_2$ is a Daugavet center.
\label{DCfromExmp}
\end{proposition}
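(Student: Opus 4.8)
The plan is to verify criterion (ii) of Theorem~\ref{charDC}, so I first record that $\|G\|=1$. For the upper bound, for any $(x_1,x_2)$ one has $\|f_1x_1+f_2x_2\|\le f_1\|x_1\|+f_2\|x_2\|=(f_1,f_2)(\|x_1\|,\|x_2\|)\le\|(f_1,f_2)\|_{F^*}\,\|(x_1,x_2)\|=\|(x_1,x_2)\|$, using $f_1,f_2\ge0$ and $(f_1,f_2)\in S_{F^*}^+$. Equality is approached along the edge $[c_1,c_2]$: feeding in $(x,\hat{x}_1x)$ with $\|x\|=1$ gives input norm $\|(1,\hat{x}_1)\|_F=\|c_1\|_F=1$ and $G(x,\hat{x}_1x)=(f_1+f_2\hat{x}_1)x=x$, since $c_1$ lies on the line $f_1a_1+f_2a_2=1$. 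Hence $\|G\|=1$.

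Now fix $y_0\in S_X$, $x_0^*=(x_1^*,x_2^*)\in S_{(X\oplus_FX)^*}=S_{X^*\oplus_{F^*}X^*}$ and $\eps>0$, and put $w^*:=(\|x_1^*\|,\|x_2^*\|)\in S_{F^*}^+$. Choose an auxiliary $\eps'>0$ to be fixed at the end (of order $\eps/4$). The geometric hypothesis enters through the remark preceding the statement: the flat edge $[c_1,c_2]$ meets the slice $S(B_F,w^*,\eps')$, so I can pick $(a_1,a_2)\in[c_1,c_2]\cap S(B_F,w^*,\eps')$, which satisfies $a_1,a_2\ge0$, $f_1a_1+f_2a_2=1$, and $w^*(a_1,a_2)>1-\eps'$. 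The candidate vector will be $x:=(a_1u,a_2v)$ for $u,v\in B_X$ still to be produced; note $\|x\|\le\|(a_1,a_2)\|_F=1$ automatically, and $Gx=f_1a_1u+f_2a_2v$, $x_0^*(x)=a_1x_1^*(u)+a_2x_2^*(v)$.

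The core of the argument is choosing $u,v$ so that $x$ lands in the prescribed slice while $Gx$ adds almost isometrically to $y_0$. I would produce them by two successive applications of Lemma~\ref{DPlin} in $X$: first take $v\in S(B_X,x_2^*/\|x_2^*\|,\eps')$ additive over $\mathrm{span}\{y_0\}$, giving $\|y_0+f_2a_2v\|\ge(1-\eps')(1+f_2a_2)$; then take $u\in S(B_X,x_1^*/\|x_1^*\|,\eps')$ additive over $\mathrm{span}\{y_0,v\}$, giving $\|f_1a_1u+y_0+f_2a_2v\|\ge(1-\eps')\bigl((1-\eps')(1+f_2a_2)+f_1a_1\bigr)$. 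Since $f_1a_1+f_2a_2=1$ and $1+f_2a_2\le2$, the right-hand side is at least $2(1-\eps')^2\ge2-4\eps'$, so $\|Gx+y_0\|>2-\eps$ once $\eps'<\eps/4$. For the slice, the slice memberships give $x_1^*(u)>\|x_1^*\|(1-\eps')$ and $x_2^*(v)>\|x_2^*\|(1-\eps')$, whence $x_0^*(x)>(1-\eps')\bigl(a_1\|x_1^*\|+a_2\|x_2^*\|\bigr)=(1-\eps')\,w^*(a_1,a_2)>(1-\eps')^2>1-2\eps'$, so $x\in S(B_{X\oplus_FX},x_0^*,\eps)$. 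Theorem~\ref{charDC}(ii) then yields that $G$ is a Daugavet center.

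The main obstacle is exactly this simultaneous fit: the slice condition pulls $x$ toward the direction of $x_0^*$, while making $\|Gx+y_0\|$ near $2$ pulls $Gx$ toward $y_0$, and a priori these two demands are unrelated. The two-coordinate splitting decouples them — the $x_0^*$-condition is met coordinate-by-coordinate through the two normalized slices (this forces the weights $a_1,a_2$ to be chosen near $w^*$, which is precisely where the touching property of $[c_1,c_2]$, i.e. the hypothesis $F\in\mathcal{F}_{2,2}\cup\mathcal{F}_{2,3}$, is used), while the isometric-addition condition is supplied by the ``$\ell_1$-like'' vectors furnished by the Daugavet property. A minor point to dispatch is the degenerate case $x_1^*=0$ or $x_2^*=0$ (only one can vanish, since $\|w^*\|_{F^*}=1$): the vanishing coordinate contributes nothing to $x_0^*(x)$, so the corresponding vector may be chosen via Lemma~\ref{DPlin} with an arbitrary slice, solely to preserve the additivity.
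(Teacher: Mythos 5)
Your proposal is correct and follows essentially the same route as the paper's proof: verify criterion (ii) of Theorem~\ref{charDC} by applying Lemma~\ref{DPlin} twice to obtain almost-isometrically additive vectors in the two coordinates, and choose the weights $(a_1,a_2)$ on the flat edge $[c_1,c_2]$ inside the slice $S(B_F,w^*,\cdot)$ so that $f_1a_1+f_2a_2=1$ holds simultaneously with the slice condition. The only differences are cosmetic (order of the two applications of Lemma~\ref{DPlin}, the $\eps$ bookkeeping, and your explicit treatment of the degenerate case $x_i^*=0$, which the paper's inequality $x_i^*(\tilde{x}_i)\ge\|x_i^*\|(1-\eps/4)$ absorbs silently).
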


\begin{proof}
At first, calculate $\|G\|$:
$$\|G\|=\sup_{(x_1, x_2)\in S_{X\oplus_F X}}\|f_1x_1+f_2x_2\|=\sup_{(x_1,x_2)\in S_{X\oplus_F X}}(f_1\|x_1\|+f_2\|x_2\|)=1.$$
Let $\eps>0$, $y_0 \in S_X$ and $x^*=(x_1^*, x_2^*)\in S_{(X \oplus_F X)^*}$.

By Lemma~\ref{DPlin} there exists an $\tilde{x}_1\in B_X$ with $x_1^*(\tilde{x}_1) \geq \|x_1^*\|(1-\eps/4)$ and 
\begin{equation} 
\|y_0+t\tilde{x}_1\|\ge\left(1-\frac{\eps}{4}\right)(1+|t|)
\label{DP1}
\end{equation} 
for every $t\in\R$.
Using again Lemma~\ref{DPlin} we have an $\tilde{x}_2\in B_X$ with $x_2^*(\tilde{x}_2) \geq \|x_2^*\|(1-\eps/4)$ and 
\begin{equation} 
\|y+t\tilde{x}_2\|\ge\left(1-\frac{\eps}{4}\right)(\|y\|+|t|)
\label{DP2}
\end{equation}
for every $y\in\mathop{\rm lin}\nolimits\{y_0,\tilde{x}_1\}$ and every $t\in\R$. 

Denote $w^*:=(\|x_1^*\|,\|x_2^*\|)\in S_{F^*}^+$. Let $(a_1, a_2)\in S(B_{F},w^*,3\eps/4)$ such that $f_1a_1+f_2a_2=1$. Then for $x:=(a_1\tilde{x}_1,a_2\tilde{x}_2)\in B_{X\oplus_F X}$ we have
\begin{eqnarray*}
x^*(x)~=~a_1x_1^*(\tilde{x}_1)+a_2x_2^*(\tilde{x}_2) &\geq&\left(1-\frac{\eps}{4}\right)(a_1\|x_1^*\|+a_2\|x_2^*\|)\\ &\geq&  \left(1-\frac{\eps}{4}\right)\left(1-\frac{3\eps}{4}\right)~>~1-\eps.
\end{eqnarray*}
Hence $x\in S(B_{X \oplus_F X},x^*,\eps)$ and 
$$\|y_0+Gx\|=\|y_0+f_1a_1\tilde{x}_1+f_2a_2\tilde{x}_2\|$$
by~(\ref{DP2}) $$>\left(1-\frac{\eps}{4}\right)(\|y_0+f_1a_1\tilde{x}_1\|+f_2a_2)$$
by~(\ref{DP1}) $$>\left(1-\frac{\eps}{4}\right)^2(1+f_1a_1+f_2a_2)=2\left(1-\frac{\eps}{4}\right)^2>2-\eps.$$
Theorem~\ref{charDC}, item (ii) implies that $G$ is a Daugavet center. 
\end{proof}

\begin{corollary}
For an $F$ there exists a Daugavet domain $X_1\oplus_F X_2$ if and only if $F\in \mathfrak{M}_2$.
\label{corDD}
\end{corollary}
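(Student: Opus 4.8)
The plan is to treat the two implications separately and to observe that the genuine mathematical work has already been carried out in the preceding sections, so that the corollary is essentially a synthesis of Corollary~\ref{CorNoDCfromF} and Proposition~\ref{DCfromExmp}. For the ``only if'' direction I would simply appeal to the contrapositive of Corollary~\ref{CorNoDCfromF}: that corollary asserts that $F\notin\mathfrak{M}_2$ forces $X_1\oplus_F X_2$ to fail to be a Daugavet domain for \emph{every} pair $X_1,X_2$. Hence if some sum $X_1\oplus_F X_2$ happens to be a Daugavet domain, then $F$ cannot lie outside $\mathfrak{M}_2$, i.e. $F\in\mathfrak{M}_2$.

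For the ``if'' direction I would fix $F\in\mathfrak{M}_2=\{\ell_1^{\scriptscriptstyle (2)}\}\cup\{\ell_\infty^{\scriptscriptstyle (2)}\}\cup\mathcal{F}_{2,2}\cup\mathcal{F}_{2,3}$ and exhibit, in each of the four cases, a concrete Daugavet domain of the form $X_1\oplus_F X_2$. The core case is handled by Proposition~\ref{DCfromExmp}: for any $F\in\mathcal{F}_{2,2}\cup\mathcal{F}_{2,3}$ choose a space $X$ with the Daugavet property (for instance $X=C[0,1]$), and let $(f_1,f_2)\in S_{F^*}^+$ be the functional described before that proposition. Then $G\dopu X\oplus_F X\to X$, $G(x_1,x_2)=f_1x_1+f_2x_2$, is a Daugavet center, so $X\oplus_F X$ is a Daugavet domain. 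This settles two of the four cases at once.

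The remaining cases $F=\ell_1^{\scriptscriptstyle (2)}$ and $F=\ell_\infty^{\scriptscriptstyle (2)}$ I would dispatch using the classical stability facts recalled in the introduction and at the start of Section~6. Taking any $X$ with the Daugavet property, the sums $X\oplus_1 X$ and $X\oplus_\infty X$ again have the Daugavet property by the results of~\cite{BKSW} and~\cite{KadSSW}; and a space with the Daugavet property is automatically a Daugavet domain, since its identity operator is then a Daugavet center (so one may take $Y=X$ and $G=\mathrm{Id}$). Collecting the four cases yields the ``if'' direction and completes the proof. I do not expect a real obstacle here, as every ingredient is already available; the only point requiring care is the routine bookkeeping of checking that the four listed cases genuinely exhaust $\mathfrak{M}_2$ and that Proposition~\ref{DCfromExmp} indeed covers both $\mathcal{F}_{2,2}$ and $\mathcal{F}_{2,3}$, which it does since it is stated precisely for $F\in\mathcal{F}_{2,2}\cup\mathcal{F}_{2,3}$.
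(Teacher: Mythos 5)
Your proposal is correct and follows essentially the same route the paper intends: the ``only if'' direction is the contrapositive of Corollary~\ref{CorNoDCfromF}, and the ``if'' direction combines Proposition~\ref{DCfromExmp} for $\mathcal{F}_{2,2}\cup\mathcal{F}_{2,3}$ with the known $\ell_1^{\scriptscriptstyle (2)}$ and $\ell_\infty^{\scriptscriptstyle (2)}$ examples (the paper would cite Proposition~\ref{ells} or the remarks opening Section~6, while you invoke the stability of the Daugavet property under $\oplus_1$ and $\oplus_\infty$ sums plus the observation that the identity on such a sum is a Daugavet center --- an equally valid variant of the same argument).
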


\begin{remark}
Note that $\mathfrak{M}_2\nsubseteq \mathfrak{N}_2$. Then Corollary~\ref{corDD} and Corollary~\ref{noDCtoSomeF} imply that there exist Daugavet domains which are not Daugavet ranges.
\end{remark}

Now we present more examples of Daugavet centers acting from $X_1\oplus_{F} X_2$ where  $F=\ell_{1}^{\scriptscriptstyle (2)}$ or $F=\ell_{\infty}^{\scriptscriptstyle (2)}$. 

\begin{proposition} 
Let $X$ have the Daugavet property. Then 
\begin{statements} 
\item The operator $G \dopu X \oplus_1 X \to X$, $G(x_1, x_2)=x_1+x_2$ is a Daugavet center.
\item For every $f_1, f_2>0$ the operator $G \dopu X \oplus_{\infty} X \to X$, $G(x_1, x_2)=f_1x_1+f_2x_2$ is a Daugavet center.
\end{statements}
\label{ells}
\end{proposition}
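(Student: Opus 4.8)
The plan is to verify both operators through item (ii) of Theorem~\ref{charDC}, using the dual identifications $(X\oplus_1 X)^*=X^*\oplus_\infty X^*$ and $(X\oplus_\infty X)^*=X^*\oplus_1 X^*$ recorded in the proof of Lemma~\ref{CorFDeny}. For (a) one checks at once that $\|G\|=1$, so Theorem~\ref{charDC} applies to $G$ itself; for (b) one has $\|G\|=f_1+f_2$, so I would pass to the normalized operator $\tilde G:=G/(f_1+f_2)$ and use the fact (stated at the start of the proof of Lemma~\ref{NoDCfromDeny}) that $G$ is a Daugavet center if and only if $\tilde G$ is.

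For part (a) the argument is short. Fix $y_0\in S_X$, $x^*=(x_1^*,x_2^*)\in S_{(X\oplus_1 X)^*}$ and $\eps>0$; then $\max\{\|x_1^*\|,\|x_2^*\|\}=1$, so some coordinate, say the first, satisfies $\|x_1^*\|=1$. Since $X$ has the Daugavet property, item (ii) of Theorem~\ref{charDC} applied to $\mathrm{Id}\dopu X\to X$ yields an $x_1\in S(B_X,x_1^*,\eps)$ with $\|x_1+y_0\|>2-\eps$. Taking $x:=(x_1,0)\in B_{X\oplus_1 X}$ gives $x^*(x)=x_1^*(x_1)>1-\eps$ and $Gx=x_1$, whence $\|Gx+y_0\|>2-\eps$; item (ii) of Theorem~\ref{charDC} then shows $G$ is a Daugavet center. (If instead $\|x_2^*\|=1$, one puts the mass on the second coordinate.)

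For part (b) I would replay, almost verbatim, the proof of Proposition~\ref{DCfromExmp}: $\ell_\infty^{\scriptscriptstyle (2)}$ is precisely the situation in which the segment $[c_1,c_2]$ of that proof shrinks to the single point $(1,1)\in S_F$, so the point $(a_1,a_2)$ chosen there on $[c_1,c_2]$ is now $(a_1,a_2)=(1,1)$. After normalizing so that $f_1+f_2=1$ (hence $\|\tilde G\|=1$), fix $y_0\in S_X$, $x^*=(x_1^*,x_2^*)\in S_{(X\oplus_\infty X)^*}$ — so $\|x_1^*\|+\|x_2^*\|=1$ — and $\eps>0$. Apply Lemma~\ref{DPlin} twice: first to get $\tilde x_1\in B_X$ with $x_1^*(\tilde x_1)\ge\|x_1^*\|(1-\eps/4)$ and $\|y_0+t\tilde x_1\|\ge(1-\eps/4)(1+|t|)$ for all $t$, then to get $\tilde x_2\in B_X$ with $x_2^*(\tilde x_2)\ge\|x_2^*\|(1-\eps/4)$ and $\|y+t\tilde x_2\|\ge(1-\eps/4)(\|y\|+|t|)$ for all $y\in\mathrm{lin}\{y_0,\tilde x_1\}$ and all $t$. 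Setting $x:=(\tilde x_1,\tilde x_2)\in B_{X\oplus_\infty X}$, the slice membership is automatic because $w^*:=(\|x_1^*\|,\|x_2^*\|)$ satisfies $w^*(1,1)=\|x_1^*\|+\|x_2^*\|=1$, giving $x^*(x)\ge(1-\eps/4)(\|x_1^*\|+\|x_2^*\|)=1-\eps/4>1-\eps$. Finally, grouping $y_0+f_1\tilde x_1+f_2\tilde x_2=(y_0+f_1\tilde x_1)+f_2\tilde x_2$ and applying the two linear estimates in succession yields $\|\tilde Gx+y_0\|\ge(1-\eps/4)^2(1+f_1+f_2)=2(1-\eps/4)^2>2-\eps$, exactly as in Proposition~\ref{DCfromExmp}; item (ii) of Theorem~\ref{charDC} concludes.

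The proof is essentially bookkeeping, and I expect genuine care to be needed only in two places. First, the reduction of part (b) to the case $\|\tilde G\|=1$ by normalization should be stated cleanly, together with the observation that for $\ell_\infty^{\scriptscriptstyle (2)}$ the point $(1,1)$ lies in $S_F$ and realizes the value $1$ both for $w^*$ and for the coefficient functional $(f_1,f_2)$, so that no nondegenerate edge of $S_F^+$ is actually required. Second, one should note that if some $\|x_i^*\|=0$, then Lemma~\ref{DPlin} is still invoked (with an arbitrary norm-one functional) solely to secure the linearity inequality for $\tilde x_i$, the slice inequality $x_i^*(\tilde x_i)\ge\|x_i^*\|(1-\eps/4)$ then holding trivially; this is the same harmless degeneracy already implicit in the proof of Proposition~\ref{DCfromExmp}.
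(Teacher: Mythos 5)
Your proposal is correct and matches the paper's intent: the paper gives no separate proof of Proposition~\ref{ells}, stating only that it ``can be proved the same way as Proposition~\ref{DCfromExmp}'', and your part (b) --- normalization by $f_1+f_2$, the observation that for $\ell_\infty^{\scriptscriptstyle(2)}$ the segment $[c_1,c_2]$ degenerates to $(1,1)$, and the double application of Lemma~\ref{DPlin} --- is exactly that argument, with the degenerate case $\|x_i^*\|=0$ handled properly. Your part (a) is a mild shortcut (concentrating the vector on a coordinate where $\|x_i^*\|=1$ lets you invoke the Daugavet property of $X$ via Theorem~\ref{charDC}(ii) directly, bypassing Lemma~\ref{DPlin}), but it rests on the same idea and is equally valid.
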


Proposition~\ref{ells} can be proved the same way as Proposition~\ref{DCfromExmp}.

\begin{proposition} 
Let $X$ have the Daugavet property, $F \in \mathcal{F}_{2,2} \cup \mathcal{F}_{3,2}$, and let $(f_1, f_2) \in S_{F}^+$ be the vector described above. Then $G \dopu X \to X \oplus_F X$, $Gx=(f_1x, f_2x)$ is a Daugavet center.
\label{DCtoExmp}
\end{proposition}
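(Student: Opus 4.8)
The plan is to mirror the proof of Proposition~\ref{DCfromExmp} in the dual, replacing item (ii) of Theorem~\ref{charDC} by item (iii) and the primal selection Lemma~\ref{DPlin} by its weak$^*$ analogue. First I would pin down the vector $(f_1,f_2)\in S_F^+$: since $F\in\mathcal{F}_{2,2}\cup\mathcal{F}_{3,2}$ we have $F^*\in\mathcal{F}_{2,2}\cup\mathcal{F}_{2,3}$, so the construction preceding Proposition~\ref{DCfromExmp}, carried out for $F^*$, produces a functional on $F^*$, i.e.\ a vector $(f_1,f_2)\in S_{F^{**}}^+=S_F^+$, together with the geometric fact that for every $w=(w_1,w_2)\in S_F^+$ and every $\eps>0$ there is a $g^*=(g_1,g_2)\in S_{F^*}^+$ with $f_1g_1+f_2g_2=1$ and $w_1g_1+w_2g_2>1-\eps$. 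A direct computation gives $\|G\|=\|(f_1,f_2)\|_F=1$, and using $(X\oplus_F X)^*=X^*\oplus_{F^*}X^*$ one checks that $G^*(y_1^*,y_2^*)=f_1y_1^*+f_2y_2^*$, which is exactly the dual shape of the operator in Proposition~\ref{DCfromExmp}.

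The key tool I need is the weak$^*$ version of Lemma~\ref{DPlin}: \emph{if $X$ has the Daugavet property, then for every finite-dimensional subspace $E\subset X^*$, every $\eps>0$ and every weak$^*$ slice $S(B_{X^*},z,\eps)$ with $z\in S_X$, there is an $x^*\in S(B_{X^*},z,\eps)$ with $\|e^*+tx^*\|\ge(1-\eps)(\|e^*\|+|t|)$ for all $e^*\in E$ and $t\in\R$.} This is the genuine obstacle of the proof. It is strictly weaker than the Daugavet property of $X^*$ (which may fail, e.g.\ for $X=C[0,1]$, whose dual has atoms), because it concerns only weak$^*$ slices; it follows from the weak$^*$-slice characterisation of the Daugavet property together with a finite $\eps$-net argument on $S_E$ and weak$^*$ compactness of $B_{X^*}$, dualising the argument behind Lemma~\ref{DPlin}.

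Granting this, the construction runs as in Proposition~\ref{DCfromExmp}. Fix $\eps>0$, $y_0=(y_{0,1},y_{0,2})\in S_{X\oplus_F X}$ and $x_0^*\in S_{X^*}$. Applying the weak$^*$ lin lemma twice, I would first choose $z_1^*\in S(B_{X^*},y_{0,1}/\|y_{0,1}\|,\eps/4)$ with $\|x_0^*+tz_1^*\|\ge(1-\eps/4)(1+|t|)$ for all $t$, and then $z_2^*\in S(B_{X^*},y_{0,2}/\|y_{0,2}\|,\eps/4)$ with $\|w^*+tz_2^*\|\ge(1-\eps/4)(\|w^*\|+|t|)$ for every $w^*\in\mathrm{lin}\{x_0^*,z_1^*\}$ and all $t$ (the case $\|y_{0,i}\|=0$ being handled by dropping the corresponding norming requirement, since then $z_i^*(y_{0,i})=0$ anyway). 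With $w=(\|y_{0,1}\|,\|y_{0,2}\|)\in S_F^+$ the geometric fact yields $g^*=(g_1,g_2)\in S_{F^*}^+$ with $f_1g_1+f_2g_2=1$ and $g_1\|y_{0,1}\|+g_2\|y_{0,2}\|>1-3\eps/4$. I then set $y^*:=(g_1z_1^*,g_2z_2^*)$.

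Finally I would verify the two required inequalities. Monotonicity of the lattice norm gives $\|y^*\|_{X^*\oplus_{F^*}X^*}\le\|g^*\|_{F^*}=1$, and
$$y^*(y_0)=g_1z_1^*(y_{0,1})+g_2z_2^*(y_{0,2})\ge\left(1-\frac{\eps}{4}\right)\bigl(g_1\|y_{0,1}\|+g_2\|y_{0,2}\|\bigr)>1-\eps,$$
so $y^*\in S(B_{(X\oplus_F X)^*},y_0,\eps)$. For the norm of $G^*y^*+x_0^*=(x_0^*+f_1g_1z_1^*)+f_2g_2z_2^*$, the two lin estimates give, exactly as in Proposition~\ref{DCfromExmp},
$$\|G^*y^*+x_0^*\|\ge\left(1-\frac{\eps}{4}\right)\bigl(\|x_0^*+f_1g_1z_1^*\|+f_2g_2\bigr)\ge\left(1-\frac{\eps}{4}\right)^2(1+f_1g_1+f_2g_2)=2\left(1-\frac{\eps}{4}\right)^2>2-\eps.$$
Item (iii) of Theorem~\ref{charDC} then shows that $G$ is a Daugavet center. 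The only non-routine ingredient is the weak$^*$ lin lemma of the second paragraph; everything else is the formal dual of Proposition~\ref{DCfromExmp}.
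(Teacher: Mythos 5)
Your proof is correct, but it takes a genuinely different route from the paper's. The paper's own proof is a two-line duality reduction: it computes $G^*(x_1^*,x_2^*)=f_1x_1^*+f_2x_2^*$ on $X^*\oplus_{F^*}X^*$, observes that this is exactly the operator of Proposition~\ref{DCfromExmp} (with $X^*$, $F^*$ in place of $X$, $F$), concludes that $G^*$ is a Daugavet center, and transfers back via $\|G+T\|=\|G^*+T^*\|$. You instead verify item (iii) of Theorem~\ref{charDC} directly, replacing Lemma~\ref{DPlin} by its weak$^*$ analogue for weak$^*$ slices of $B_{X^*}$. The concern you raise is well founded and is precisely the delicate point of the paper's shorter argument: applying Proposition~\ref{DCfromExmp} to $G^*$ literally requires $X^*$ to have the Daugavet property, which does not follow from the Daugavet property of $X$ (your $C[0,1]$ example is apt). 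The honest justification of the paper's step is exactly your weak$^*$ lin lemma, which does hold: the ``slice inside a slice'' lemma of \cite{KadSSW} (Lemma 2.2 there) has a weak$^*$-slice version because the norm of a sum of functionals is approximately attained on $B_X$, so the refined slice can again be taken to be a weak$^*$ slice, and the finite-net iteration of Lemma~\ref{DPlin} then goes through verbatim (the weak$^*$ compactness of $B_{X^*}$ you invoke is not actually needed; the net lives on the norm-compact set $S_E$). Your identification of $(f_1,f_2)$ via $F^*\in\mathcal{F}_{2,2}\cup\mathcal{F}_{2,3}$, the slice-membership estimate for $y^*=(g_1z_1^*,g_2z_2^*)$, and the two-step lower bound for $\|G^*y^*+x_0^*\|$ all check out, including the degenerate case $\|y_{0,i}\|=0$. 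In short: the paper buys brevity at the cost of an elliptical citation; your version is longer but self-contained and makes explicit the only nontrivial ingredient.
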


\begin{proof}
Consider the adjoint operator $G^* \dopu X^* \oplus_{F^*} X^* \to X^*$. For every $(x^*_1, x^*_2) \in X^* \oplus_{F^*} X^*$ and every $x \in X$ we have $$G^*(x^*_1, x^*_2)(x)=\left\langle(f_1x, f_2x), (x^*_1, x^*_2) \right\rangle=f_1x^*_1(x)+f_2x^*_2(x).$$
Consequently, $G^*(x^*_1, x^*_2)=f_1x^*_1+f_2x^*_2$ for every $(x^*_1, x^*_2) \in X^* \oplus_{F^*} X^*$. 
By Proposition~\ref{DCfromExmp} $G^*$ is a Daugavet center. The equation~(\ref{eqDC}) implies that if $G^*$ is a Daugavet center then $G$ is as well.
\end{proof}

\begin{corollary}
For an $F$ there exists a Daugavet range $X_1\oplus_F X_2$ if and only if $F\in \mathfrak{N}_2$.
\end{corollary}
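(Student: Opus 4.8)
The plan is to establish the two implications separately, relying on results already assembled. The forward implication---if $X_1\oplus_F X_2$ is a Daugavet range for some $X_1$ and $X_2$, then $F\in\mathfrak{N}_2$---is precisely the contrapositive of Corollary~\ref{noDCtoSomeF}, which asserts that $F\notin\mathfrak{N}_2$ forces $X_1\oplus_F X_2$ to fail to be a Daugavet range for every choice of $X_1$ and $X_2$. So this direction requires no further work.

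For the reverse implication I would exploit the decomposition $\mathfrak{N}_2=\{\ell_{1}^{\scriptscriptstyle (2)}\}\cup\{\ell_{\infty}^{\scriptscriptstyle (2)}\}\cup\mathcal{F}_{2,2}\cup\mathcal{F}_{3,2}$ recorded in Section~5 and treat the four pieces in two groups. For $F\in\mathcal{F}_{2,2}\cup\mathcal{F}_{3,2}$, Proposition~\ref{DCtoExmp} already furnishes the desired example: taking any $X$ with the Daugavet property and the vector $(f_1,f_2)\in S_F^+$ described there, the operator $G\dopu X\to X\oplus_F X$, $Gx=(f_1x,f_2x)$, is a Daugavet center, so $X\oplus_F X$ is a Daugavet range. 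For $F=\ell_{1}^{\scriptscriptstyle (2)}$ and $F=\ell_{\infty}^{\scriptscriptstyle (2)}$ I would instead invoke the classical fact recalled in the introduction: if $X$ has the Daugavet property then both $X\oplus_1 X$ and $X\oplus_{\infty} X$ have it as well; since a space with the Daugavet property is a Daugavet range (its identity operator being a Daugavet center), each of these sums is a Daugavet range. Hence every $F\in\mathfrak{N}_2$ admits spaces $X_1$, $X_2$ with $X_1\oplus_F X_2$ a Daugavet range.

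I do not expect a serious obstacle, since both directions reduce to facts established earlier; the only point demanding care is that the four subclasses genuinely exhaust $\mathfrak{N}_2$ and that the hypotheses of Proposition~\ref{DCtoExmp}---namely $F\in\mathcal{F}_{2,2}\cup\mathcal{F}_{3,2}$ with the specified $(f_1,f_2)$---match the cases at hand. The one thing worth double-checking is that $\mathcal{F}_{2,2}\cup\mathcal{F}_{3,2}$ together with the two spaces $\ell_{1}^{\scriptscriptstyle (2)}$ and $\ell_{\infty}^{\scriptscriptstyle (2)}$ really covers every polygon $S_F^+$ consisting of at most two edges, which is exactly the content of the classification preceding Corollary~\ref{CorNoDCfromF}.
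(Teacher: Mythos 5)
Your proof is correct and follows essentially the same route the paper intends: the ``only if'' direction is Corollary~\ref{noDCtoSomeF}, and the ``if'' direction combines Proposition~\ref{DCtoExmp} for $F\in\mathcal{F}_{2,2}\cup\mathcal{F}_{3,2}$ with the known examples for $\ell_{1}^{\scriptscriptstyle (2)}$ and $\ell_{\infty}^{\scriptscriptstyle (2)}$ (the paper also records these explicitly as Daugavet centers in Proposition~\ref{ells2}, but the classical fact you cite suffices equally). The decomposition $\mathfrak{N}_2=\{\ell_{1}^{\scriptscriptstyle (2)}\}\cup\{\ell_{\infty}^{\scriptscriptstyle (2)}\}\cup\mathcal{F}_{2,2}\cup\mathcal{F}_{3,2}$ you rely on is exactly the one stated in Section~5, so no gap remains.
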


\begin{remark}
Since $\mathfrak{N}_2\nsubseteq \mathfrak{M}_2$, we have the examples of Daugavet ranges which are not Daugavet domains.
\end{remark}

Proposition~\ref{ells2} which gives more examples of Daugavet centers acting into $X_1\oplus_{F} X_2$ for $F=\ell_{1}^{\scriptscriptstyle (2)}$ and $F=\ell_{\infty}^{\scriptscriptstyle (2)}$,  can be proved in a very similar way to Proposition~\ref{DCtoExmp}.

\begin{proposition} 
Let $X$ have the Daugavet property. Then 
\begin{statements} 
\item The operator $G \dopu X \to X \oplus_{\infty} X$, $Gx=(x, x)$ is a Daugavet center.
\item For every $f_1, f_2>0$ the operator $G \dopu X \to X \oplus_{1} X$, $Gx=(f_1x, f_2x)$ is a Daugavet center.
\end{statements}
\label{ells2}
\end{proposition}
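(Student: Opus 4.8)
The plan is to imitate the proof of Proposition~\ref{DCtoExmp} and pass to the adjoint operator. Since $(X\oplus_\infty X)^*=X^*\oplus_1 X^*$ and $(X\oplus_1 X)^*=X^*\oplus_\infty X^*$, evaluating $G^*$ on a pair $(x_1^*,x_2^*)$ against a vector $x$ shows that in (a) the adjoint is $G^*\dopu X^*\oplus_1 X^*\to X^*$, $G^*(x_1^*,x_2^*)=x_1^*+x_2^*$, and in (b) it is $G^*\dopu X^*\oplus_\infty X^*\to X^*$, $G^*(x_1^*,x_2^*)=f_1x_1^*+f_2x_2^*$. These are precisely the operators of Proposition~\ref{ells}(a) and~\ref{ells}(b), now over the base space $X^*$. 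Once they are known to be Daugavet centers, the identity~(\ref{eqDC}) forces $G$ to be a Daugavet center as well, exactly as at the end of the proof of Proposition~\ref{DCtoExmp}.

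The delicate point is the clause ``$G^*$ is a Daugavet center.'' Applying Proposition~\ref{ells} to $G^*$ invokes it with base space $X^*$, and so presupposes that $X^*$ has the Daugavet property; this is stronger than our hypothesis and need not hold. I would therefore verify $G$ directly through item (ii) of Theorem~\ref{charDC}, using only the Daugavet property of $X$. The norms cause no trouble: $\|G\|=1$ in (a) and $\|G\|=f_1+f_2$ in (b), and in (b) one first rescales the weights so that $\|G\|=1$, which is legitimate because $G$ is a Daugavet center iff $G/\|G\|$ is.

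For (a), fix $y_0=(u,v)\in S_{X\oplus_\infty X}$, $x_0^*\in S_{X^*}$, $\eps>0$, and note $\max\{\|u\|,\|v\|\}=1$, say $\|u\|=1$; the Daugavet property of $X$ gives $x\in S(B_X,x_0^*,\eps)$ with $\|x+u\|>2-\eps$, so $\|Gx+y_0\|=\max\{\|x+u\|,\|x+v\|\}\ge\|x+u\|>2-\eps$ and item (ii) of Theorem~\ref{charDC} applies. For (b), fix $y_0=(u,v)\in S_{X\oplus_1 X}$, so $\|u\|+\|v\|=1$; here the single vector $x$ must be nearly additive with respect to both $u$ and $v$ at once, and this is exactly the content of Lemma~\ref{DPlin} applied to $Y_0=\mathop{\rm lin}\nolimits\{u,v\}$ and the slice $S(B_X,x_0^*,\eps)$, which yields $\|f_1x+u\|\ge(1-\eps)(f_1+\|u\|)$ and $\|f_2x+v\|\ge(1-\eps)(f_2+\|v\|)$; adding these gives $\|Gx+y_0\|=\|f_1x+u\|+\|f_2x+v\|\ge(1-\eps)(f_1+f_2+\|u\|+\|v\|)=2(1-\eps)$, and item (ii) of Theorem~\ref{charDC} finishes the proof after the usual adjustment of $\eps$. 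I expect this simultaneous-alignment step in (b) to be the main obstacle; it is the same use of Lemma~\ref{DPlin} as in Proposition~\ref{DCfromExmp}, and it explains why $\ell_1^{\scriptscriptstyle (2)}$ and $\ell_\infty^{\scriptscriptstyle (2)}$ can be treated directly while the general case of Proposition~\ref{DCtoExmp} must go through the adjoint.
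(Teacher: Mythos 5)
Your proof is correct, but it deliberately departs from the paper's route. The paper disposes of Proposition~\ref{ells2} in one line: pass to the adjoint exactly as in Proposition~\ref{DCtoExmp}, identify $G^*$ with the operator of Proposition~\ref{ells} over the base space $X^*$, and use that $G$ is a Daugavet center whenever $G^*$ is. You spotted the real weakness of that reduction: invoking Proposition~\ref{ells} for $G^*$ presupposes the Daugavet property of $X^*$, which does not follow from that of $X$ (for instance $C[0,1]$ has the Daugavet property while $C[0,1]^*=M[0,1]$ does not, having atoms). Your direct verification through item (ii) of Theorem~\ref{charDC} sidesteps duality altogether and is sound in both cases: in (a) the $\ell_\infty$-norm lets you align $x$ with whichever coordinate of $y_0=(u,v)$ has norm one, using only the Daugavet property of $X$; in (b), after normalizing $f_1+f_2=1$, Lemma~\ref{DPlin} applied to $Y_0=\mathop{\rm lin}\nolimits\{u,v\}$ gives the simultaneous estimates $\|u+f_1x\|\ge(1-\eps)(\|u\|+f_1)$ and $\|v+f_2x\|\ge(1-\eps)(\|v\|+f_2)$, which sum to $2-2\eps$. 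What the paper's approach buys is brevity and uniformity with Proposition~\ref{DCtoExmp}; what yours buys is a self-contained argument resting only on the stated hypothesis, and it is arguably the argument the paper's reduction would need anyway to be made airtight.
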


\section*{Acknowledgements}

The author is grateful to her scientific adviser Vladimir M. Kadets for attention and numerous fruitful discussions, and to the referee for useful remarks.

\end{document}